\documentclass[twoside]{article}

\usepackage{graphicx}
\usepackage{tikz}
\usepackage{amsfonts}
\usepackage{amsmath}
\usepackage{amssymb}
\usepackage[]{units}
\usepackage[export]{adjustbox}
\usepackage{setspace}
\usepackage{amsthm}
\usepackage{cleveref}
\usepackage{thmtools}
\usepackage{thm-restate}
\usepackage{enumitem}
\usepackage{titlesec}
\usepackage{fancyhdr}
\usepackage{abstract}

\declaretheorem[name=Theorem,numberwithin=section]{theorem}
\declaretheorem[name=Proposition, sibling=theorem]{proposition}
\newtheorem{lemma}[theorem]{Lemma}
\newtheorem{corollary}[theorem]{Corollary}

\theoremstyle{definition}
\newtheorem{definition}[theorem]{Definition}
\newtheorem{example}[theorem]{Example}

\usepackage[letterpaper, portrait, margin=1.5in]{geometry}
\titleformat{\section}
  {\normalfont\scshape\filcenter}{\thesection.}{1em}{}

\fancypagestyle{mypagestyle}{
  \fancyhf{}
  \fancyhead[OC]{PIECEWISE EXCLUDING GEODESIC LANGUAGES}
  \fancyhead[EC]{M. FRANKE}
  \fancyhead[R]{\thepage}
  
}
\pagestyle{mypagestyle}

\title{\large\bf{PIECEWISE EXCLUDING GEODESIC LANGUAGES}}
\author{\normalsize{MARANDA FRANKE}}
\date{}

\renewcommand{\title}{\large}

\begin{document}
\maketitle
\vspace{-.5cm}
\begin{abstract}
\noindent \textsc{Abstract}. The complexity of a geodesic language has connections to algebraic properties of the group. Gilman, Hermiller, Holt, and Rees show that a finitely generated group is virtually free if and only if its geodesic language is locally excluding for some finite inverse-closed generating set. The existence of such a correspondence and the result of Hermiller, Holt, and Rees that finitely generated abelian groups have piecewise excluding geodesic language for all finite inverse-closed generating sets motivated our work. We show that a finitely generated group with piecewise excluding geodesic language need not be abelian and give a class of infinite non-abelian groups which have piecewise excluding geodesic languages for certain generating sets. The quaternion group is shown to be the only non-abelian 2-generator group with piecewise excluding geodesic language for all finite inverse-closed generating sets. We also show that there are virtually abelian groups with geodesic languages which are not piecewise excluding for any finite inverse-closed generating set.
\end{abstract}
\vspace{.3cm}
\section{\textsc{Introduction}}

For a group $G$ generated by a finite set $X$, Dehn's word problem asks if there exists an algorithm which determines whether or not a given word over $X\cup X^{-1}$ represents the trival element in $G$~\cite{Dehn}.  Dehn's word problem is known to be unsolvable in general~\cite{Boone}.  But for certain classes of groups, such as groups with a computable geodesic language for some generating set, there are solutions to the word problem. There are two known classes of groups with regular geodesic language for all finite generating sets: word hyperbolic groups~\cite{WP} and abelian groups ~\cite[Theorems 4.4 and 4.1]{N&S}. There are many known types of groups with regular geodesic language for some finite generating set: these include Coxeter groups~\cite{Howlett}, virtually abelian groups and geometically finite hyperbolic groups~\cite{N&S}, Artin groups of finite type and more generally Garside groups~\cite{Charney}, Artin groups of large type ~\cite{HR}, and groups hyperbolic relative to virtually abelian subgroups~\cite{AC}. The class of groups with regular geodesic language for some generating set is moreover closed under graph products~\cite{LMW}.

By considering more restrictive language classes than regular, it is possible to discover more properties of the underlying groups. In some cases, a characterization can be found. Gilman, Hermiller, Holt, and Rees show that a finitely generated group is virtually free if and only if its geodesic language is locally excluding for some finite symmetric (that is, inverse-closed) generating set ~\cite[Theorem 1]{GHHR}. Hermiller, Holt, and Rees show that a finitely generated group is free abelian if and only if, for some finite symmetric generating set, it has piecewise excluding geodesic language where the excluded piecewise subwords all have length one~\cite[Theorem 3.2]{LT}. Our research is motivated by the existence of these correspondences and by the following implications.

\begin{theorem}\label{PE} ~\cite[Proposition 6.2]{starfree}
Finitely generated abelian groups have piecewise excluding geodesic language for all finite symmetric generating sets.
\end{theorem}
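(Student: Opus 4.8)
The plan is to present the geodesic language as a finite intersection of subword‑avoidance conditions, using the fact that in an abelian group the geodesic property is inherited by scattered subwords, together with Higman's lemma. Fix a finite symmetric generating set $X$ of the abelian group $G$, let $X^{*}$ be the set of words over $X$, write $\overline{w}\in G$ for the element represented by $w\in X^{*}$, and let $L\subseteq X^{*}$ be the geodesic language. Recall that $L$ is \emph{piecewise excluding} exactly when there is a finite set $F\subseteq X^{*}$ such that $L=\{w\in X^{*}:\text{no }u\in F\text{ is a piecewise subword of }w\}$; writing $u\preceq w$ for ``$u$ occurs in $w$ as a (scattered) subword,'' this says precisely that $L$ is $\preceq$‑downward closed and that the upward‑closed complement $X^{*}\setminus L$ has finitely many $\preceq$‑minimal elements. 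So the proof splits into two steps: (i) show $L$ is $\preceq$‑downward closed, and (ii) invoke a well‑quasi‑ordering result to extract the finite set $F$.

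For step (i) the claim is: if $w\in L$ and $v\preceq w$, then $v\in L$. Fix an occurrence of $v$ as a subsequence of $w$ and let $c$ be the complementary subword, so $|w|=|v|+|c|$. Because $G$ is abelian, $\overline{w}=\overline{v}\,\overline{c}$, regardless of how the letters of $v$ and $c$ are interleaved in $w$. If $v$ were not geodesic there would be $v'\in X^{*}$ with $\overline{v'}=\overline{v}$ and $|v'|<|v|$; then $\overline{v'c}=\overline{v'}\,\overline{c}=\overline{v}\,\overline{c}=\overline{w}$ while $|v'c|=|v'|+|c|<|v|+|c|=|w|$, contradicting geodesicity of $w$. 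Hence $v\in L$. This is the one point where commutativity is genuinely used: in a nonabelian group one cannot in general delete the letters of a scattered subword of $w$, multiply the surviving letters, and predict the result. (Equivalently, one may run this argument one letter at a time and iterate.)

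For step (ii): since $X$ is finite, Higman's lemma gives that $\preceq$ is a well‑quasi‑order on $X^{*}$, so the upward‑closed set $X^{*}\setminus L$ has only finitely many $\preceq$‑minimal elements $u_{1},\dots,u_{m}$ — the minimal non‑geodesic words over $X$ — and $w\notin L$ if and only if $u_{i}\preceq w$ for some $i$. Taking $F=\{u_{1},\dots,u_{m}\}$ exhibits $L$ as piecewise excluding, and since $X$ was an arbitrary finite symmetric generating set this proves the proposition. Essentially all the content lies in step (i), and even there the only care needed is to work with piecewise (scattered) subwords rather than contiguous ones and to check the degenerate cases $\varepsilon$ and one‑letter words; the only real ``obstacle'' is definitional, namely confirming that the paper's notion of piecewise excluding agrees with ``complement of a finitely generated up‑set in the subword order'' — and if that notion is formulated with extra constraints (for instance on bounded prefixes or suffixes, or restricting to words over a subalphabet), those are vacuous here and the same $F$ still works.
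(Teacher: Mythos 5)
Your argument is correct: closure of geodesics under scattered (piecewise) subwords follows exactly as you say from commutativity, and Higman's lemma then yields the finite set of minimal non-geodesic words as the excluded pieces. Note, though, that this paper does not prove the statement at all --- it is quoted from the cited reference \cite{starfree} --- and your route (subword-closedness plus well-quasi-ordering of $A^*$ under the subsequence order) is essentially the standard argument given there, so there is nothing to flag beyond the observation that the existence of $F$ obtained this way is non-constructive, which the statement does not require.
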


\begin{theorem}\label{PT} ~\cite[Proposition 6.3]{starfree}
Finitely generated virtually abelian groups have piecewise testable geodesic language for some finite symmetric generating set.
\end{theorem}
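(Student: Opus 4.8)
The plan is to reduce to a normal free abelian subgroup, work with a deliberately over-generated generating set, cut geodesic words at the letters that change the coset, and reduce each of the finitely many resulting pieces to the geodesic language of the abelian part, which is piecewise excluding --- hence piecewise testable --- by Theorem~\ref{PE}. For the reduction, let $A\cong\mathbb Z^n$ be a finite-index \emph{normal} subgroup of $G$ (the normal core of a free abelian finite-index subgroup), $Q:=G/A$ finite, and $\Phi\le\operatorname{Aut}(A)$ the finite image of the conjugation action. Fix the generating set $X=S\sqcup T$, where $T$ is a symmetric set of representatives of the nontrivial cosets and $S$ is a finite symmetric $\Phi$-invariant generating set of $A$ chosen large enough to contain the $\Phi$-orbit of every element of $A$ that is a product of at most $|Q|$ members of $T$; since $A\langle T\rangle=G$, this $X$ generates. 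The point of the enlargement is a \emph{no-shortcuts lemma}: in a geodesic word the $T$-letters $y_1,\dots,y_m$ cannot make the sequence of partial cosets $1,\bar y_1,\bar y_1\bar y_2,\dots$ repeat a value, because a repeat isolates a sub-loop that is a product of at most $|Q|$ $T$-letters lying in $A$ --- hence a single letter of $S$ --- whose substitution strictly shortens the word; consequently $m\le|Q|-1$ in every geodesic word.

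Every word factors uniquely as $w=s_0y_1s_1\cdots y_ms_m$ with $s_i\in S^*$ and $y_i\in T$, and sliding the $S$-blocks to the front (using normality of $A$) exhibits the element of $w$ as $\bigl(\sum_{k}\phi_k(\overline{s_k})\bigr)\cdot(y_1\cdots y_m)$, where $\phi_k\in\Phi$ is conjugation by $y_1\cdots y_k$ and --- crucially --- permutes $S$, hence preserves $S$-word-length. A short optimization then yields $|g|_X=\min\{\,|h|_T+|gh^{-1}|_S:h\in gA\cap\langle T\rangle\,\}$ together with the following: writing $h'=y_1\cdots y_m$ and $b=g(h')^{-1}\in A$, the word $w$ is geodesic if and only if (i) $y_1\cdots y_m$ is $T$-geodesic, (ii) the untwisted concatenation $\widehat w:=\phi_0(s_0)\cdots\phi_m(s_m)\in S^*$ is an $S$-geodesic word, and (iii) $b$ lies in a region $\operatorname{Sect}_\tau\subseteq A$ cut out by finitely many inequalities of the form $|b|_S-|bc|_S\le d_c$.

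I would then split $L:=L_{\mathrm{geo}}(G,X)$ as the finite union, over the finitely many $T$-geodesic templates $\tau=(y_1,\dots,y_m)$ with $m\le|Q|-1$, of $L_\tau:=\{\,w\text{ matching }\tau:\ \widehat w\in L_{\mathrm{geo}}(A,S)\ \text{and}\ \overline{\widehat w}\in\operatorname{Sect}_\tau\,\}$. The condition that $w$ matches $\tau$ --- exactly $m$ letters of $T$, spelling $y_1\cdots y_m$ in order --- is a Boolean combination of shuffle ideals, hence piecewise testable. Since the positions of the $T$-letters in a template-matching word are thereby pinned down, $u$ being a subword of the $j$-th $S$-block of $w$ is equivalent to $y_1\cdots y_j\,u\,y_{j+1}\cdots y_m$ being a subword of $w$; hence if $w$ and $w'$ both match $\tau$ and share all subwords of length at most $k+|Q|$, then their $j$-th $S$-blocks share all subwords of length at most $k$, and so --- applying the permutations $\phi_j$ --- do $\widehat w$ and $\widehat{w'}$. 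Feeding in Theorem~\ref{PE} (so that $L_{\mathrm{geo}}(A,S)$ is piecewise testable with some threshold $k$) and Simon's characterization of piecewise testable languages, this reduces everything to a single remaining point: that $L_{\mathrm{geo}}(A,S)\cap\{\,w:\overline w\in\operatorname{Sect}_\tau\,\}$ is piecewise testable for each $\tau$.

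That point is the main obstacle, and the place where real work is needed. The non-commutative bookkeeping --- the twists $\phi_k$ and the coset tracking --- has been disposed of cheaply above; what is left is the geometry of geodesics in $\mathbb Z^n$ constrained to a cone. The idea is that $\operatorname{Sect}_\tau$ is essentially a union of cones over faces of the unit ball $\operatorname{conv}(S)$, and that --- because a geodesic word in $\mathbb Z^n$ cannot cancel --- on each support that actually occurs among geodesic words the condition $\overline w\in\operatorname{Sect}_\tau$ collapses into threshold conditions on letter multiplicities, each of which is a shuffle ideal, with an induction on $n$ handling the behaviour near the boundaries of the cones. Finally, I would emphasize that this is genuinely a piecewise \emph{testable} rather than piecewise \emph{excluding} phenomenon: already for $D_\infty=\langle a\rangle\rtimes\langle t\rangle$ with $X=\{a,a^{-1},t\}$ the construction produces $L_{\mathrm{geo}}=a^*\cup(a^{-1})^*\cup a^*t(a^{-1})^*\cup(a^{-1})^*ta^*$, a Boolean combination of shuffle ideals which is not the complement of any single one.
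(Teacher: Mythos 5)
There is a genuine gap, and you flag it yourself. (Note that the paper does not prove this statement at all --- it is quoted from Proposition~6.3 of the source cited as \cite{starfree} --- so there is no in-paper argument to compare against; your proposal has to stand on its own.) The reduction you set up is sound as far as it goes: passing to a normal finite-index copy of $\mathbb{Z}^n$, enlarging $S$ to a $\Phi$-invariant set containing all short $T$-products lying in the abelian subgroup, the no-shortcut lemma bounding the number of $T$-letters in a geodesic by $|Q|-1$, the length formula $|g|_X=\min\{|h|_T+|gh^{-1}|_S\}$, and the Simon-congruence bookkeeping showing that subword data of $w$ up to length $k+|Q|$ determines the template, the blocks, and hence the subword data of $\widehat w$ up to length $k$ --- all of that is essentially correct, if in need of careful writing. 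But the argument bottoms out exactly at the point you defer: you never prove that $\mathsf{Geo}(\mathbb{Z}^n,S)\cap\{w:\overline{w}\in\operatorname{Sect}_\tau\}$ is piecewise testable, and this is not a routine verification to be supplied later; it is where the entire content of the theorem lives, since everything preceding it is bookkeeping that reduces the non-abelian group to this question.

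Concretely, sector membership is a condition of the form $|b|_S-|bc|_S\le d_c$ on the \emph{group element} $b$ represented by $w$, i.e.\ a global metric comparison in $\mathbb{Z}^n$ with respect to a generating set $S$ that you were forced to enlarge and cannot normalize further. There is no a priori reason such a condition is detectable from the piecewise subwords of $w$ of bounded length, and your sketch (``cones over faces of $\operatorname{conv}(S)$, threshold conditions on letter multiplicities, induction on $n$'') glosses over the real obstacles: for general finite symmetric $S$ the word metric $|\cdot|_S$ agrees with a piecewise-linear norm only up to bounded, periodically varying corrections, so $|b|_S-|bc|_S$ is not constant on cones; geodesic words whose Parikh images lie near the common boundary of two sectors can agree on all piecewise subwords of any fixed length while lying on opposite sides of the inequality; and the assertion that ``a geodesic word in $\mathbb{Z}^n$ cannot cancel'' is false for arbitrary $S$, where letters of a geodesic may partially cancel coordinatewise. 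Handling exactly these issues is the substance of the Neumann--Shapiro/Hermiller--Holt--Rees analysis that the cited Proposition~6.3 relies on. As written, your proposal is a plausible reduction plus an unproven key claim, so it does not yet constitute a proof.
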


Cannon gives an example showing that a finitely generated virtually abelian group can have a non-regular geodesic language for some finite symmetric generating set~\cite{N&S}. A natural question to investigate is if \Cref{PE} is a correspondence; that is, if groups with a piecewise excluding geodesic language for some generating set must be abelian. In Chapter 3, we show that a finitely generated group having piecewise excluding geodesic language does not imply that the group is abelian, even if the condition is strengthened to having piecewise excluding geodesic language for all finite symmetric generating sets.

\begin{restatable*}{proposition}{FbyA}
Let $K$ be a finitely generated abelian group, $H$ a finite group, and $G$ an extension of $H$ by $K$: $1\to H\to G\to K\to 1$. Then $G$ has a piecewise excluding geodesic language for some finite symmetric generating set.
\end{restatable*}

\begin{restatable*}{proposition}{Q}\label{Q8}
The quaternion group, $Q_8=<i,j,k\,|\,ijk^{-1},jki^{-1},kij^{-1},i^4>$, has piecewise excluding geodesic language for all finite symmetric generating sets.
\end{restatable*}

We show that the group $Q_8$ is a somewhat special 2-generator group and that the class of groups with piecewise excluding geodesic languages for all finite symmetric generating sets does not have nice closure properties.

\begin{restatable*}{theorem}{OnlyQ}\label{OQ}
The quaternion group, $Q_8$, is the only non-abelian 2-generator group with piecewise excluding geodesic language for all finite symmetric generating sets.
\end{restatable*}

\begin{restatable*}{proposition}{QxQ}
The class of groups which have piecewise excluding geodesic languages for all finite symmetric generating sets is not closed under direct products.
\end{restatable*}

Recall that \Cref{PT} shows that virtually abelian groups have piecewise testable geodesic language, a class which contains piecewise excluding geodesic languages. We show that the group property `virtually abelain' also does not correspond to piecewise excluding geodesic language by exhibiting a family of virtually abelian groups which have, for any finite symmetric generating set $A$, a geodesic word containing both a generator and its inverse.  By the proposition below, groups with a quotient isomorphic to a group in this family have, for any finite symmetric generating set, geodesic language which is not piecewise excluding.

\begin{restatable*}{corollary}{VA}
There are finitely generated virtually abelian groups whose geodesic language is not piecewise excluding  for any finite symmetric generating set.
\end{restatable*}

\begin{restatable*}{proposition}{Extns}
Let $G$ be an extension $1\to H\to G\stackrel{\pi}{\to}K\to 1$ of finitely generated groups $H$ and $K$ and let $A$ be a finite symmetric generating set for $G$.  If $awa^{-1}$ is geodesic in $K$ over the generating set $\pi (A)$ for some $a\in\pi(A)$ and $w\in\pi(A)^*$, then the geodesic language of $G$ over $A$ is not piecewise excluding.
\end{restatable*}

\section{\textsc{Background}}

In this paper, all groups we consider are finitely generated and all generating sets are finite and symmetric (that is, inverse closed).  We use the notation $[x,y]$ to mean the word $xyx^{-1}y^{-1}$. Let $G$ be a group with generating set $A$. We denote the identity element of $G$ by $1_G$ and use the notation $g=_G h$ to indicate that $g$ and $h$ are the same element of $G$. The smallest normal subgroup of $G$ containing a set $\{x_1,...,x_n\}$ is denoted by $<x_1,...,x_n>^N$. A set of \emph{normal forms} for $G$ over $A$ is a set $N$ of words over $A$ such that each element of $G$ has a unique representative in $N$. The \emph{Cayley graph} of $G$ over $A$, denoted $\Gamma (G,A)$, is the directed graph with a vertex labeled $g$ for each $g\in G$ and an edge labeled by $a$ from $g$ to $ga$ for each $a\in A$ and each $g\in G$. The graph is endowed with a metric by making each edge isometric to the unit interval and using the induced Euclidean metric. A \emph{geodesic word} in $\Gamma (G,A)$ is a word which labels a path of minimal length between two vertices in $\Gamma (G,A)$. The set of all finite length words over $A$, including the empty word, is denoted by $A^*$. A \emph{language} $L$ over $A$ is a subset of $A^*$.

\begin{definition} The \emph{geodesic language} of $G$ over $A$, denoted $\mathsf{Geo}(G,A)$, is the set of all geodesic words in $\Gamma (G,A)$.
\end{definition}

A language is \emph{regular} if it can be built out of finite subsets of the alphabet using the operations of concatenation, union, intersection, complementation, and * (Kleene closure); such an expression for a language is called a \emph{regular expression}. A language $L$ is regular if and only if $L$ can be recognized by a finite state automaton. For a reference on finite state automata and formal language theory, see~\cite{automata}.

\begin{example}\label{Cannon}
A virtually abelian group need not have regular geodesic language for every finite symmetric generating set. Cannon~\cite{N&S} exhibits the group $G=\mathbb{Z}^2 \rtimes \nicefrac{\mathbb{Z}}{2\mathbb{Z}}=<a,b,t\:|\;[a,b],t^2,tatb^{-1}>$, which has regular geodesic language with the generating set $\{a,b,t\}^{\pm 1}$ but not with the generating set $\{a,d,c,t\}^{\pm 1}$, where $c=a^2$ and $d=ab$.
\end{example}

The following three language class definitions can be found in~\cite{starfree}. A language $L$ over an alphabet $A$ is \emph{locally excluding} if there is a finite set of words $F\subset A^*$ such that $w\in L$ if and only if $w$ has no (contiguous) subword in $F$. A language $L$ over an alphabet $A$ is \emph{piecewise testable} if L is defined by a regular expression combining terms of the form $A^*a_1A^*a_2A^*\cdots A^*a_kA^*$, where $a_i\in A$, using the operations of concatenation, union, intersection, complementation, and * (Kleene closure). The string $a_1a_2\cdots a_n\in A^*$ is called a \emph{piecewise subword} of $w$ if $w=w_0a_1w_1a_2\cdots a_nw_n$ for some $w_i\in A^*$. A language $L$ over an alphabet $A$ is \emph{piecewise excluding} if there is a finite set of words $F\subset A^*$ such that $w\in L$ if and only if $w$ contains no piecewise subword in $F$.

\section{\textsc{Results}}

The following observation proved to be useful in showing particular geodesic languages were not piecewise excluding.
\begin{lemma}\label{Rmk} Let $G$ be a group generated by a finite symmetric generating set $A$. If $\mathsf{Geo}(G,A)$ is piecewise excluding, then $aa^{-1}$ must be an excluded piecewise subword for every $a\in A$ which does not represent the identity element of $G$.
\end{lemma}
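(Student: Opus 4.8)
The plan is to argue directly from the definition, using a fixed finite exclusion set. Assume $\mathsf{Geo}(G,A)$ is piecewise excluding and fix a finite set $F\subset A^*$ with the property that $w\in\mathsf{Geo}(G,A)$ if and only if $w$ has no piecewise subword in $F$. The goal is to show that, for every $a\in A$ with $a\neq_G 1_G$, the word $aa^{-1}$ is a forbidden piece, i.e.\ $aa^{-1}\in F$; this immediately gives that no geodesic word can have $aa^{-1}$ as a piecewise subword.

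First I would pin down which very short words are geodesic. The empty word, which I denote $\varepsilon$, is the unique geodesic representative of $1_G$, so $\varepsilon\in\mathsf{Geo}(G,A)$; since $\varepsilon$ is a piecewise subword of every word over $A$, this forces $\varepsilon\notin F$. Next, if $a\in A$ and $a\neq_G 1_G$, then $a^{-1}\neq_G 1_G$ as well, so every word representing $a$ or $a^{-1}$ has length at least $1$; hence the single letters $a$ and $a^{-1}$ are geodesic words. The only piecewise subwords of a one-letter word $b$ are $\varepsilon$ and $b$, and we have just seen that $\varepsilon\notin F$, so geodesicity of $a$ and of $a^{-1}$ yields $a\notin F$ and $a^{-1}\notin F$.

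Finally, consider $aa^{-1}$. It represents $1_G$ and has length $2$, while $\varepsilon$ represents $1_G$ and has length $0$, so $aa^{-1}\notin\mathsf{Geo}(G,A)$; therefore $aa^{-1}$ must contain some piecewise subword lying in $F$. But the piecewise subwords of $aa^{-1}$ are exactly $\varepsilon$, $a$, $a^{-1}$, and $aa^{-1}$ itself, and the first three were ruled out of $F$ in the previous step. Hence $aa^{-1}\in F$, as required.

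I do not expect a real obstacle here: the argument is a short case check on the at most four piecewise subwords of $aa^{-1}$. The one delicate point is the bookkeeping in the middle step — verifying that the hypothesis $a\neq_G 1_G$ is precisely what makes $a$ and $a^{-1}$ geodesic and thereby keeps them out of $F$. It is worth remarking that the monotone, single-exclusion-set structure of the piecewise excluding condition is used essentially: one should not expect the same conclusion merely from $\mathsf{Geo}(G,A)$ being piecewise testable, since there the defining expression need not arise from a single family of forbidden pieces.
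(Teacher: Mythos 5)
Your proof is correct and follows essentially the same route as the paper's: the single letters $a$ and $a^{-1}$ are geodesic (since $a\neq_G 1_G$), while $aa^{-1}$ is not, so the only way a piecewise excluding description can reject $aa^{-1}$ without rejecting $a$, $a^{-1}$, or the empty word is to have $aa^{-1}$ itself in the exclusion set. Your version simply spells out the case check on the four piecewise subwords of $aa^{-1}$ a bit more explicitly than the paper does.
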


\begin{proof} First note that if $a$ or $a^{-1}$ is excluded from $\mathsf{Geo}(G,A)$, then $a$ must represent the identity of the group. If $a\neq_G 1_G$, then $a,a^{-1}\in\mathsf{Geo}(G,A)$. For any $a\in A$, $aa^{-1}\notin \mathsf{Geo}(G,A)$ since $aa^{-1}=_G1_G$. In a piecewise excluding geodesic language, the only way to exclude the word $aa^{-1}$ from the language without excluding $a$ or $a^{-1}$ is by excluding $aa^{-1}$ as a piecewise subword.
\end{proof}

This suggests something strong about commutativity and seems to be evidence in favor of the existence of a correspondence between abelian groups and piecewise excluding geodesic lanugages. But there are non-abelian groups which have piecewise excluding geodesic language for some generating sets.

\begin{lemma} All finite groups have a generating set whose geodesic language is piecewise excluding.\end{lemma}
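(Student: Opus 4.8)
The plan is to find, for any finite group $G$, a generating set $A$ so large and redundant that every geodesic word over $A$ has length at most one, from which the piecewise excluding property is immediate. Specifically, I would take $A = G \setminus \{1_G\}$, the set of all non-identity elements of $G$, which is finite, symmetric (since $g^{-1} \neq 1_G$ whenever $g \neq 1_G$), and generating. The key observation is that for this generating set, multiplication by any $a \in A$ moves between any two vertices of the Cayley graph that differ by a non-identity element, so every pair of distinct vertices is joined by an edge. Hence the only geodesic words are the empty word (labeling the trivial path at each vertex) and the single letters $a \in A$ (labeling a length-one path from $g$ to $ga$, which is genuinely geodesic since $g \neq ga$).

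From here the proof is a short check. I would argue that $\mathsf{Geo}(G, A) = \{\varepsilon\} \cup A$: the empty word and every single generator are geodesic by the distance observation above, and conversely any word $w$ of length $\geq 2$ is non-geodesic because its endpoints are connected by a path of length at most one. Then I exhibit the finite excluded set $F$: take $F = \{aa' : a, a' \in A\}$, the set of all words of length exactly two over $A$. A word $w$ contains a piecewise subword in $F$ if and only if $w$ has length at least two, so $w$ avoids all piecewise subwords in $F$ if and only if $|w| \leq 1$, i.e. $w \in \{\varepsilon\} \cup A = \mathsf{Geo}(G, A)$. This verifies the definition of piecewise excluding with the finite witness set $F$.

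I do not anticipate a genuine obstacle here; the statement is essentially a sanity-check that the piecewise excluding property is not vacuous for non-abelian groups, and the ``take all elements as generators'' trick trivializes the geodesic combinatorics. The only point requiring a moment's care is confirming that $A = G \setminus \{1_G\}$ really is symmetric and that $F$ as chosen is finite — both are immediate since $G$ is finite. One could alternatively phrase $F$ as $\{a b : a,b \in A\}$ and note $|F| \leq |A|^2 = (|G|-1)^2 < \infty$. It is worth remarking, as the surrounding text does, that this contrasts with \Cref{Rmk}: here $aa^{-1}$ is indeed among the excluded piecewise subwords (it lies in $F$), consistent with the lemma, so there is no tension.
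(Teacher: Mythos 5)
Your proposal is correct and follows the same route as the paper: take $A = G\setminus\{1_G\}$, observe that every geodesic has length at most one, and exclude all length-two words (equivalently all $A^*aA^*bA^*$ patterns) as piecewise subwords. The extra verification of symmetry of $A$ and finiteness of $F$ is fine but adds nothing beyond the paper's argument.
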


\begin{proof} Let $G$ be a finite group and let $A=G\setminus\{1_G\}$. Then $\mathsf{Geo}(G,A)$ is piecewise excluding. In particular, $\mathsf{Geo}(G,A)=A^*\setminus\{A^*aA^*bA^*\;|\;a,b\in A\}$, which is the set of all words over $A$ of length at most one.
\end{proof}

Finite groups may also have piecewise excluding geodesic language for smaller generating sets. Consider $D_8=<a,b,t\;|\;a^2,b^2,(ab)^4,ababt>$ and $A=\{a,b,t\}$. Then $\mathsf{Geo}(D_8,A)= A^*\setminus (\{A^*xA^*xA^*\;|\;x\in A\}\cup\{A^*xA^*yA^*zA^*\;|\;x,y,z\in A\})$, the set of all words over $A$ of length at most two which do not contain duplicate letters.  Note that with the generating set $B=\{a,b\}$, however, $D_8$ does not have piecewise excluding geodesic language, as $aba\in\mathsf{Geo}(D_8,B)$.

\FbyA

\begin{proof} Let the maps be $1\to H\stackrel{\iota}{\to} G\stackrel{\pi}{\to} K\to 1$ and let $A$ be a  finite symmetric generating set for $K$ with $1_K\notin A$. By ~\Cref{PE} $\mathsf{Geo}(K, A)$ is piecewise excluding; let $F$ be the finite set of excluded piecewise subwords. For each $a\in A$, choose a unique preimage under $\pi$, denoted $\bar{a}$, such that $\overline{a^{-1}}=\bar{a}^{-1}$. Let $\bar{A}=\{\bar{a}\;|\;a\in A\}$ and let $\bar{F}=\{\bar{a_1}\cdots\bar{a_n}\;|\;a_1\cdots a_n\in F\}$. Then words over $\bar{A}$ are geodesic if and only if they have no piecewise subword in $\bar{F}$. Let $\bar{H}=\iota(H\setminus\{1_H\})$. Note that as no generators in $\bar{H}$ represent the identity element of $G$, words of length one over $\bar{H}$ are geodesic; because each non-identity element of $H$ has a representative in $\bar{H}$, words of length two over $\bar{H}$ are not geodesic. Because $\iota(H)$ is a normal subgroup of $G$, for each $h\in H$ and each $a\in A$ there is an $h_a\in H$ such that $\bar{a}\iota(h)\bar{a}^{-1}=_G \iota(h_a)$. Suppose that $w\in (\bar{A}\cup \bar{H})^*$.  Write $w=a_1h_1a_2h_2\cdots a_nh_n$ where $a_i\in\bar{A}^*$ and $h_i\in\bar{H}$ for all $i\in\{1,...,n\}$. Then $w=_G \tilde{h}a_1a_2\cdots a_n$ where $\tilde{h}= (h_1)_{a_1}(h_2)_{a_1a_2}\cdots (h_n)_{a_1a_2\cdots a_n}$; that is, $w$ is equal in $G$ to a word in $(\bar{A}\cup\bar{H})^*$ with at most one element of $\bar{H}$ followed by $a_1a_2\cdots a_n$, the piecewise subword of $w$ over $\bar{A}$. Therefore any word in $(\bar{A}\cup\bar{H})^*$ with more than one letter from $\bar{H}$ or containing a piecewise subword over $\bar{A}$ which has a piecewise subword in $\bar{F}$ is not geodesic. Thus $\mathsf{Geo}(G,\bar{A}\cup\bar{H})$ is the piecewise excluding language whose set of excluded piecewise subwords is $\bar{H}^2\cup \bar{F}$.
\end{proof}

\Q

\begin{proof} Because the center of the group is $\{1_{Q_8},i^2\}$ and all other elements have order four, any set of elements of $Q_8$ containing at most one order four element (not including inverses) generates an abelian group. Hence any generating set for the non-abelian group $Q_8$ includes at least two order four elements which do not commute. Let $A$ be a finite symmetric generating set for $Q_8$, and let $a$ and $b$ be two order four elements in $A$ which do not commute. Then the eight words $1, a, a^{-1}, b, b^{-1}, a^2, ba$, and $b^{-1}a$ represent distinct elements of $Q_8$. The element $1$ has order one, the element $a^2$ has order two, and no two of the remaining (order four) elements can be equal because that would contradict that $a$ and $b$ do not commute and both have order four. Thus, any word of length at least three is not geodesic.  The language of geodesics is therefore piecewise excluding: the set of excluded piecewise subwords is the set of words of length three together with all words of length at most two that are not geodesic.
\end{proof}

The following two lemmas are used in the proof of \Cref{OQ}.

\begin{lemma}\label{Z5} All proper quotients of the group $G=\nicefrac{\mathbb{Z}}{5\mathbb{Z}}\rtimes\mathbb{Z}=<a,x\,|\,a^5,xax^{-1}a^2>$ are either abelian or, for some finite symmetric generating set, have a geodesic language which is not piecewise excluding.
\end{lemma}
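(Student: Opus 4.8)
The plan is to first classify the proper quotients of $G=\nicefrac{\mathbb{Z}}{5\mathbb{Z}}\rtimes\mathbb{Z}$, showing that the non-abelian ones are exactly the finite groups $Q_t\cong\nicefrac{\mathbb{Z}}{5\mathbb{Z}}\rtimes\nicefrac{\mathbb{Z}}{4t\mathbb{Z}}$ for $t\ge 1$, and then to produce for each $Q_t$ a finite symmetric generating set whose geodesic language is not piecewise excluding, using \Cref{Rmk}.

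For the classification I would argue that the subgroup $<a>\cong\nicefrac{\mathbb{Z}}{5\mathbb{Z}}$ is the torsion subgroup of $G$: any $a^ix^j$ with $j\ne 0$ maps to $j$ under $G\to\mathbb{Z}$ and hence has infinite order. So $<a>$ is characteristic, and a nontrivial $N\trianglelefteq G$ either contains $<a>$ or meets it trivially. If $<a>\subseteq N$, then $N$ is the preimage of some $m\mathbb{Z}\le\mathbb{Z}$ and $G/N\cong\nicefrac{\mathbb{Z}}{m\mathbb{Z}}$ (or $\mathbb{Z}$), which is abelian. If $N\cap<a>=1$ and $N\ne 1$, then $N$ embeds in $\mathbb{Z}$, so $N=<g>$ with $g=a^ix^k$, $k>0$; requiring $xgx^{-1}=a^{-2i}x^k\in<g>$ forces $i=0$, and then requiring $ax^ka^{-1}=a^{1-(-2)^k}x^k\in<x^k>$ forces $(-2)^k\equiv 1\pmod 5$, i.e.\ $4\mid k$ (since $-2$ has order $4$ modulo $5$). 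Hence the proper non-abelian quotients are precisely the groups $Q_t=<a,x\,|\,a^5,x^{4t},xax^{-1}a^2>$ for $t\ge 1$, which are non-abelian because the action $a\mapsto a^{-2}$ is nontrivial.

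For the second part, fix $t$ and take $A=\{x^{\pm1},(ax^2)^{\pm1}\}$; this generates $Q_t$ because $a=(ax^2)x^{-2}$. The key point will be that the conjugate word $x(ax^2)x^{-1}$ is geodesic. Using $xax^{-1}=a^{-2}$ one gets $x(ax^2)x^{-1}=_{Q_t}a^{-2}x^2$, a non-identity element, and I would check that no word of length at most $2$ over $A$ represents it. Writing elements of $Q_t$ as pairs $(i,j)\in\nicefrac{\mathbb{Z}}{5\mathbb{Z}}\times\nicefrac{\mathbb{Z}}{4t\mathbb{Z}}$, with $x^{\pm1}=(0,\pm1)$ and $ax^2=(1,2)$ and the $j$-coordinate of a product being the sum of the $j$-coordinates of its factors, we have $a^{-2}x^2=(3,2)$; but the only words of length $\le 2$ over $A$ whose $j$-coordinate is $\equiv 2\pmod{4t}$ represent $x^2=(0,2)$ or $ax^2=(1,2)$, neither of which is $(3,2)$. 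Granting this, $x(ax^2)x^{-1}$ is a geodesic word in $\mathsf{Geo}(Q_t,A)$ of length $3$ that contains both the letter $x$ and the letter $x^{-1}$, so $xx^{-1}$ is a piecewise subword of it; since $x\ne_{Q_t}1_{Q_t}$ while $xx^{-1}=_{Q_t}1_{Q_t}$ is not geodesic, \Cref{Rmk} implies $\mathsf{Geo}(Q_t,A)$ cannot be piecewise excluding.

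The step I expect to be the real work is that length-$\le 2$ verification. It is a finite check, but it should be organized carefully rather than done by blind enumeration of the sixteen products: the useful observation is that right-multiplication by $(ax^2)^{\pm1}$ shifts the $\nicefrac{\mathbb{Z}}{5\mathbb{Z}}$-coordinate by $(-2)^{\ell}$, where $\ell$ is the current $\nicefrac{\mathbb{Z}}{4t\mathbb{Z}}$-coordinate, which sharply restricts the first coordinates attainable by short words, and the constraint on the second coordinate then eliminates what remains, uniformly in $t$. By contrast, the classification step and the identity $x(ax^2)x^{-1}=a^{-2}x^2$ are routine.
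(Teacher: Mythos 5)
Your proposal is correct, and its overall strategy is the paper's: classify the proper quotients, note the abelian ones need nothing, and for each non-abelian one produce a symmetric generating set admitting a geodesic of the form $\alpha w\alpha^{-1}$, then invoke \Cref{Rmk}. Where you differ is in the execution of both halves. For the classification, the paper runs a case analysis directly on the relators $a^{i_j}x^{n_j}$ of a presentation of the quotient (its Cases A--D show the image of $a$ is killed unless every relator is a power $x^{n_j}$ with $4\mid n_j$, which is its Case E), whereas you argue structurally: $\langle a\rangle$ is the torsion subgroup and has prime order, so a nontrivial normal subgroup either contains it (giving a quotient of $\mathbb{Z}$, hence abelian) or meets it trivially, in which case it embeds in $\mathbb{Z}$ and normality forces it to be $\langle x^{4t}\rangle$. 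Your route is cleaner and makes the list of non-abelian proper quotients $\mathbb{Z}/5\mathbb{Z}\rtimes\mathbb{Z}/4t\mathbb{Z}$ explicit; the paper's is more mechanical but stays entirely inside elementary relator manipulations. For the witness, the paper uses $B=\{ax,xa\}^{\pm1}$ and the word $(ax)(xa)(ax)^{-1}$, excluding shorter representatives by a parity argument on the exponent of $x$; you use $A=\{x,ax^2\}^{\pm1}$ and $x(ax^2)x^{-1}=_{Q_t}a^{-2}x^2$, excluding them because the $\mathbb{Z}/4t\mathbb{Z}$-coordinate of a word of length at most two lies in $\{-4,\dots,4\}$. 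Your check does go through, but you should make the edge case $t=1$ explicit: there $4t=4$ and $-2\equiv 2\pmod 4$, so $(ax^2)^{-1}$, $x^{-1}x^{-1}$, and the sums $\pm 4\equiv 0$ enter the enumeration; the first two represent $(1,2)$ and $(0,2)$ respectively, so your stated conclusion (only $(0,2)$ and $(1,2)$ arise, never $(3,2)$) still holds, but as written the enumeration implicitly assumes the second coordinate sum must equal $2$ on the nose, which is only automatic for $t\ge 2$.
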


\begin{proof} First note that a set of normal forms for $G$ over $A=\{a,x\}^{\pm 1}$ is $\{a^ix^n\,|\,i\in\{0,1,2,3,$ $4\},n\in\mathbb{Z}\}$. Observe that $xa=_G a^3x$, and so $x^na=_G a^{3^n}x^n$ for all $n\in\mathbb{Z}$, and that $x^4a=_G ax^4$. Any proper quotient $H$ of $G$ is isomorphic to $\nicefrac{G}{<a^{i_1}x^{n_1},a^{i_2}x^{n_2},...,a^{i_k}x^{n_k}>^N}$ for some $\{a^{i_j}x^{n_j}\}_{j=1}^k$ where for each $j\in\{1,...,k\}$, $i_j\in\{0,1,2,3,4\}$, $n_j\in\mathbb{Z}$, and $i_j,n_j$ are not both zero.

\noindent\emph{Case A: There is a $j\in\{1,...,k\}$ such that $n_j=0$.}

In this case $i_j\in\{1,2,3,4\}$, so $<a^{i_j}>\,=_G\,<a>$ is trivial in the quotient. Thus $H$ is a quotient of $\mathbb{Z}$, and so $H$ is abelian.

\noindent\emph{Case B: There is a $j\in\{1,...,k\}$ such that $i_j=0$ and $n_j\neq 0\;(\mathsf{mod} 4)$.}

In this case $a(x^{n_j})a^{-1}=_G ax^{n_j}a^4=_G a^{1+4\cdot3^{n_j}}x^{n_j}\in<x^{n_j}>^N,$
which implies that $a\in<x^{n_j}>^N$ for all possible $n_j$. Thus $H$ is a quotient of $\nicefrac{\mathbb{Z}}{n_j\mathbb{Z}}$, and so $H$ is abelian.

\noindent\emph{Case C: There is a $j\in\{1,...,k\}$ such that $i_j\neq 0$ and $n_j\neq 0\;(\mathsf{mod} 4)$.}

In this case $x^{n_j}(a^{i_j}x^{n_j})x^{-{n_j}}=_G x^{n_j}a^{i_j}\in<a^{i_j}x^{n_j}>^N$, which implies that $(x^{n_j}a^{i_j})^{-1}$ $=_G a^{-{i_j}}x^{-{n_j}}\in<a^{i_j}x^{n_j}>^N$. So $(a^{i_j}x^{n_j})(a^{-{i_j}}x^{-{n_j}})=_G a^{i_j+(5-i_j)3^{n_j}}\in<a^{i_j}x^{n_j}>^N.$ Note that $a^{i_j+(5-i_j)3^{n_j}}$ is a nontrivial element of $\nicefrac{\mathbb{Z}}{5\mathbb{Z}}$ for any $i_j\neq 0$ and $n_j\neq 0\;(\mathsf{mod}4 )$. Hence we have that $a\in<a^{i_j}x^{n_j}>^N$ in all subcases. Thus $H$ is a quotient of $\nicefrac{\mathbb{Z}}{{n_j}\mathbb{Z}}$, and so $H$ is abelian.

\noindent\emph{Case D: There is a $j\in\{1,...,k\}$ such that $i_j\neq 0$ and $n_j=0\;(\mathsf{mod} 4)$ is nonzero.}

Note that in this case $n_j-2=2\;(\mathsf{mod} 4)$, so $x^{{n_j}-2}a^{i_j}=_G a^{4i_j}x^{{n_j}-2}$. Thus
$x^{{n_j}-2}$ $(a^{i_j}x^{n_j})x^{-({n_j}-2)}=_G a^{-{i_j}}x^{n_j}\in<a^{i_j}x^{n_j}>^N$. This implies that $(a^{-{i_j}}x^{n_j})^{-1}=_G a^{i_j}x^{-{n_j}}\in$ $<a^{i_j}x^{n_j}>^N$. Hence $(a^{i_j}x^{n_j})(a^{i_j}x^{-{n_j}})=_G a^{2{i_j}}\in<a^{i_j}x^{n_j}>^N$. As $a^{2{i_j}}$ is a nontrivial element of $\nicefrac{\mathbb{Z}}{5\mathbb{Z}}$ for all $i_j\neq 0$,
in all subcases we have that $a\in<a^{i_j}x^{n_j}>^N$. Thus $H$ is a quotient of $\nicefrac{\mathbb{Z}}{n_j\mathbb{Z}}$, and so $H$ is abelian.

\noindent\emph{Case E: For every $j\in\{1,...,k\}$, $i_j=0$ and $n_j=0\;(\mathsf{mod} 4)$ is nonzero.}

Note that we can simplify the quotient to $\nicefrac{G}{<x^{\mathsf{gcd}(n_1,...,n_k)}>^N}$ in this case and that $\mathsf{gcd}(n_1,$ $...,n_k)\geq 4$. Consider the generating set $B=\{(ax),(xa)\}^{\pm 1}$. Observe that $(ax)^{-1}=_G a^3x^{-1}$ and $(a^3x^{-1})(xa)=_G a^{-1}$ so this is in fact a generating set for $G$, and thus for $H$ as well. Note that $(ax)(xa)(ax)^{-1}=_G a^3x$. As $a$ has order 5 in $H$, none of the generators in $B$ are equal in $H$ to $a^3x$. A word $w\in B^*$ of length two represents a group element $h\in G$ with a normal form $w'$ over $A$ that has an even power of $x$. So no words in $B^*$ of length two can be equal in $H$ to $a^3x$. Hence the word $(ax)(xa)(ax)^{-1}$ is geodesic in $H$. Therefore by \Cref{Rmk}, the geodesic language of $H$ over $B$ is not piecewise excluding.
\end{proof}

\begin{lemma}\label{BS} All proper quotients of the group $G=BS(1,2)=\,<a,t\,|\,tat^{-1}a^{-2}>$ are either abelian or, for some finite symmetric generating set, have a geodesic language which is not piecewise excluding.
\end{lemma}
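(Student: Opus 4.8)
The plan is to mimic the structure of the proof of \Cref{Z5}: reduce to the case that the quotient $H=G/N$ is non-abelian, determine which such $H$ can occur, and for each exhibit a finite symmetric generating set together with a geodesic word of the form $uvu^{-1}$, so that \Cref{Rmk} forces the geodesic language to fail piecewise exclusion. For the reduction, note that $tat^{-1}=_{G}a^{2}$ gives $a=_{G}tat^{-1}a^{-1}\in G'$, so $G'$ equals the normal closure $A:=\langle a\rangle^{N}\cong\mathbb{Z}[1/2]$, on which $t$ acts by multiplication by $2$, and $G/G'\cong\mathbb{Z}$; hence $H$ is abelian precisely when $a\in N$, and from now on I assume $a\notin N$.

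To classify the possible $N$: the subgroup $N\cap A$ is invariant under conjugation by $t^{\pm 1}$, so it is a $\mathbb{Z}[1/2]$-submodule of $\mathbb{Z}[1/2]$, i.e.\ an ideal; as $\mathbb{Z}[1/2]$ is a PID with unit group $\{\pm 2^{k}\}$, such an ideal is $\{0\}$ or $n\mathbb{Z}[1/2]$ for a unique odd $n\ge 1$. The hypothesis $a\notin N$ excludes $n=1$, and $N\cap A=\{0\}$ is impossible for nontrivial $N$: then $N$ embeds in $G/A\cong\mathbb{Z}$ via the quotient map $\pi$, and comparing $\pi$-values forces $t$ to centralize a generator of $N$, whence $N=\langle t^{m}\rangle$; but $at^{m}a^{-1}=_{G}a^{1-2^{m}}t^{m}\notin\langle t^{m}\rangle$ for $m\neq 0$. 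So $N\cap A=n\mathbb{Z}[1/2]$ with $n\ge 3$ odd. Writing $\bar g$ for the image of $g$ in $H$, this gives: $\bar a$ has order exactly $n\ge 3$, $\bar t\bar a\bar t^{-1}=\bar a^{2}$, the normal closure $\bar A:=\langle\bar a\rangle^{N}$ is cyclic of order $n$, and $H/\bar A$ is cyclic, say $\mathbb{Z}/d\mathbb{Z}$ with $d\ge 0$. Moreover, for $g\in N$ with $\pi(g)=m$ one has $aga^{-1}g^{-1}=_{G}a^{1-2^{m}}\in N\cap A=n\mathbb{Z}[1/2]$, so $2^{m}\equiv 1\pmod n$; hence every element of $\pi(N)$ is a multiple of $\mathrm{ord}_{n}(2)$, so $\mathrm{ord}_{n}(2)\mid d$ (with $d=0$ read as divisible by everything), and in particular $d\neq 1$ (else $H=\bar A$ is abelian).

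Now take $B=\{\bar a\bar t,\ \bar t\bar a\}^{\pm 1}$, which is a finite symmetric generating set for $H$ since $(\bar a\bar t)(\bar t\bar a)^{-1}=_{H}\bar a^{-1}$, and let $w=(\bar a\bar t)(\bar t\bar a)(\bar a\bar t)^{-1}$; using $\bar t\bar a=_{H}\bar a^{2}\bar t$, a short computation gives $w=_{H}\bar a^{3}\bar t$. Since $\bar t\notin\bar A$ we have $\bar a\bar t\neq_{H}1_{H}$, and $w$ contains $\bar a\bar t$ followed by $(\bar a\bar t)^{-1}$ as a piecewise subword, so by \Cref{Rmk} it is enough to prove that $w$ is geodesic over $B$, i.e.\ that $\bar a^{3}\bar t$ is not represented by any word over $B$ of length at most $2$. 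Project by $\pi'\colon H\to H/\bar A\cong\mathbb{Z}/d\mathbb{Z}$, written additively with $\pi'(\bar t)=1$ and $\pi'(\bar a)=0$: each element of $B$ has $\pi'$-value $\pm 1$, so a word of length $\ell$ over $B$ has $\pi'$-value $q\equiv\ell\pmod 2$. As $\pi'(w)=1$, a length-$2$ word ($q\in\{-2,0,2\}$) can equal $w$ only if $d\mid 3$, while a length-$\le 1$ word ($q\in\{-1,0,1\}$) can equal $w$ only if $q=1$, or $q=-1$ and $d\mid 2$. When $d=0$ or $d\ge 4$ (recall $d\neq 1$), these conditions leave only $\bar a\bar t=_{H}\bar a^{1}\bar t$ and $\bar t\bar a=_{H}\bar a^{2}\bar t$ as candidates, and these differ from $\bar a^{3}\bar t$ because $1,2,3$ are distinct modulo $n$; so $w$ is geodesic, and \Cref{Rmk} finishes this case. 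The two remaining cases give finite groups: $d=2$ forces $\mathrm{ord}_{n}(2)\mid 2$, hence $n=3$, and then $\bar t^{2}=1$ (otherwise $\bar a\in\langle\bar t\rangle$ and $H$ is cyclic), so $H\cong S_{3}$; $d=3$ forces $\mathrm{ord}_{n}(2)\mid 3$, hence $n=7$, and similarly $\bar t^{3}=1$, so $H\cong\mathbb{Z}/7\mathbb{Z}\rtimes\mathbb{Z}/3\mathbb{Z}$, the non-abelian group of order $21$. In each of these a direct check over the finitely many short words over $B$ shows that $w$ is still geodesic: for $S_{3}$, $B=\{(13),(23)\}^{\pm 1}$ and $w$ equals $(12)$, the unique element of $S_{3}$ not a product of at most two elements of $B$; for the order-$21$ group the four length-$2$ words over $B$ with $\pi'$-value $-2$ have $\bar a$-exponents $1,2,4,6\pmod 7$, none equal to $3$. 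Hence in every case $w$ is geodesic over $B$ and, by \Cref{Rmk}, $\mathsf{Geo}(H,B)$ is not piecewise excluding.

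The hard part is the classification in the second paragraph --- the $\mathbb{Z}[1/2]$-submodule argument, and the normality computations forcing $N\cap A=n\mathbb{Z}[1/2]$ with $\mathrm{ord}_{n}(2)\mid d$ --- together with the two exceptional finite quotients $S_{3}$ and $\mathbb{Z}/7\mathbb{Z}\rtimes\mathbb{Z}/3\mathbb{Z}$, where the parity obstruction on $\pi'$ does not by itself rule out $w$ being non-geodesic. These exceptions do not occur in \Cref{Z5} precisely because $\mathrm{ord}_{5}(2)=4$, which forces the analogous cyclic quotient to have order at least $4$.
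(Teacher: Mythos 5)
Your proof is correct, but the route differs from the paper's in a way worth noting. The paper classifies the non-abelian proper quotients by manipulating the defining relators $t^{-i}a^{n}t^{j}$ directly (normal forms plus Tietze-style reductions), concluding that such a quotient is a quotient of $\mathbb{Z}/n\mathbb{Z}\rtimes\mathbb{Z}$ or of $\mathbb{Z}/n\mathbb{Z}\rtimes\mathbb{Z}/m\mathbb{Z}$ with $n$ odd; you instead exploit the decomposition $BS(1,2)\cong\mathbb{Z}[1/2]\rtimes\mathbb{Z}$ and classify the kernels $N$ via the ideal $N\cap A$ of $\mathbb{Z}[1/2]$, which pins down the quotient exactly (order of $\bar a$ equal to $n$, $H/\bar A\cong\mathbb{Z}/d\mathbb{Z}$ with $\mathrm{ord}_n(2)\mid d$) rather than only up to further quotients, so you avoid the implicit iteration over quotients-of-quotients that the paper's phrasing relies on; your treatment of the degenerate case $N\cap A=\{0\}$ (forcing $N=\langle t^m\rangle$ and contradicting normality) is also a point the paper never has to address because it works relator by relator. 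On the geodesic side the two arguments coincide in essence: both use $B=\{at,ta\}^{\pm1}$ and the word $(at)(ta)(at)^{-1}=_G a^3t$ together with \Cref{Rmk}, and both rule out short representatives by what is effectively the exponent of $t$; your version packages this as a homomorphism to $H/\bar A\cong\mathbb{Z}/d\mathbb{Z}$, which is cleaner than the paper's case-by-case list of equalities forcing $t=1$, $t^2=1$, or $t^3=1$. The one genuine divergence is the two exceptional finite quotients: the paper switches generating sets there (using $\{x,y\}^{\pm1}$ for $S_3$ and $\{at,t\}^{\pm1}$ for the order-21 group), while you keep the same set $B$ and verify geodesicity by a direct finite check ($w$ is the missing transposition in $S_3$; the relevant length-two words have $\bar a$-exponents $1,2,4,6\bmod 7$ in the order-21 group), which I verified is correct; in the order-21 case you should also say explicitly that the length-one candidates $\bar a\bar t=\bar a^{1}\bar t$ and $\bar t\bar a=\bar a^{2}\bar t$ differ from $\bar a^{3}\bar t$, but this is the same trivial check you already made in the generic case. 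Your closing remark correctly identifies why \Cref{Z5} has no such exceptional cases ($\mathrm{ord}_5(2)=4$).
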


\begin{proof} First note that a set of normal forms for $G$ over $\{a,t\}^{\pm 1}$ is $\{t^{-i}a^nt^j\,|\,i,j\in(\mathbb{N}\cup 0),n\in\mathbb{Z}, \text{and }2\nmid n \text{ if both } i,j>0\}$. Any proper quotient $H$ of $G$ is isomorphic to $\nicefrac{G}{<t^{-i_1}a^{n_1}t^{j_1},t^{-i_2}a^{n_2}t^{j_2},...,t^{-i_m}a^{n_m}t^{j_m}>^N}$ for some $\{t^{-i_k}a^{n_k}t^{j_k}\}_{k=1}^m$ where for each $k\in\{1,...,m$ $\}$, $i_k,j_k\in(\mathbb{N}\cup 0),n_k\in\mathbb{Z}$, $i_k,j_k,n_k$ are not all zero, and $i_k\neq j_k$ whenever $n_k=0$. Let $H$ be a non-abelian proper quotient of $G$. We first show that $H$ is a quotient of one of a specific collection of semi-direct products.

\noindent\emph{Case 1: Suppose there is an index $k$ such that $n_k\neq 0$.}

Note that $t^{-i_k}a^{n_k}t^{j_k}=_H 1_H$. If $i_k=j_k$ then $a^{n_k}=_H 1_H$. If $i_k\neq j_k$ then $t^{i_k-j_k}=_H a^{n_k}$, which is equal in $G$ to $t^{-1}a^{2n_k}t$. Hence $t^{i_k-j_k}=_H a^{2n_k}$, and so $a^{n_k}=_Ha^{2n_k}$, which implies that $a^{n_k}=_H 1_H$ and thus that $t^{i_k-j_k}=_H 1_H$.

If $n_k$ is even, then $ta^{n_k/2}=_G a^{n_k}t=_H t$, which implies that $a^{n_k/2}=_H 1_H$. So we can continue cutting the known order of $a$ in $H$ in half until we are left with an odd number, call it $n$. Then $ta=_Ga^2t$ implies the relations $ta^l=_H a^{2l}t$ for every integer $l$ such that $1\leq l<n/2$ and $ta^l=_H a^{2l-n}t$ for every integer $l$ such that $n/2<l<n$. These relations allow us to move $a^{\pm 1}$ past $t^{\pm 1}$ in either direction in any word.

Thus $H$ is a quotient of the semidirect product $\nicefrac{\mathbb{Z}}{n\mathbb{Z}}\rtimes\mathbb{Z}=<a,t\;|\;a^n,tat^{-1}a^{-2}>$ (if $i_k=j_k$) or of the semidirect product $\nicefrac{\mathbb{Z}}{n\mathbb{Z}}\rtimes\nicefrac{\mathbb{Z}}{|i_k-j_k|\mathbb{Z}}=<a,t\;|\;a^n,t^{|i_k-j_k|},tat^{-1}a^{-2}>$ (if $i_k\neq j_k$), with $n$ odd in either case.

\noindent\emph{Case 2: Suppose there is an index $k$ such that $n_k=0$.}

Then $i_k-j_k\neq 0$, so we may replace $t^{-1}$ with $t^{|i_k-j_k|-1}$ in the relation $tat^{-1}=_Ga^2$ and obtain the relations $t^{-1}a=_Ha^{2^{|i_k-j_k|-1}}t^{-1}$ and $at=_Hta^{1-2^{|i_k-j_k|-1}}$. So $a=_Gt^{-1}a^2t=_Ha^{2^{|i_k-j_k|}}$, which implies that $a^{2^{|i_k-j_k|}-1}=_H 1_H$. Hence $H$ is a quotient of the semidirect product $\nicefrac{\mathbb{Z}}{(2^{|i_k-j_k|}-1)\mathbb{Z}}\rtimes\nicefrac{\mathbb{Z}}{|i_k-j_k|\mathbb{Z}}=<a,t\;|\;a^{2^{|i_k-j_k|}-1},t^{|i_k-j_k|},tat^{-1}a^{-2}>$.

Because $H$ is non-abelian, Cases 1 and 2 show that $H$ is isomorphic to either a quotient of $\nicefrac{\mathbb{Z}}{n\mathbb{Z}}\rtimes\mathbb{Z}$ with $n$ odd and at least three or to a quotient of $\nicefrac{\mathbb{Z}}{n\mathbb{Z}}\rtimes\nicefrac{\mathbb{Z}}{|i-j|\mathbb{Z}}$ with $n$ odd  and at least three and $|i-j|\geq 2$.

Note that if $|i-j|=2$, then $a=_H t^2a=_H a^4t^2=_H a^4$. So in this case $a^3=_H 1_H$ and, moreover, H is isomorphic to a quotient of $S_3=<x,y\,|\,x^2,y^2,(xy)^3>$. This means that either $H$ has a geodesic language which is not piecewise excluding or $H$ is abelian: if $H\cong S_3$ then the word $xyx^{-1}$ is geodesic over the generating set $\{x,y\}^{\pm 1}$; if $H$ is a proper quotient of $S_3$ then $H$ is abelian. Note that if $|i-j|=3$, then $a=_H t^3a=_H a^8t^3=_H a^8$. So in this case $a^7=_H 1_H$ and, moreover, H is isomorphic to a quotient of $\nicefrac{\mathbb{Z}}{7\mathbb{Z}}\rtimes\nicefrac{\mathbb{Z}}{3\mathbb{Z}}=$ $<a,t\,|\,a^7,t^3,tat^{-1}a^{-2}>$. This means that either $H$ has a geodesic language which is not piecewise excluding or $H$ is abelian: if $H\cong\nicefrac{\mathbb{Z}}{7\mathbb{Z}}\rtimes\nicefrac{\mathbb{Z}}{3\mathbb{Z}}$ then the word $(at)t(at)^{-1}$ is geodesic over the generating set $\{(at),t\}^{\pm 1}$; if $H$ is a proper quotient of $\nicefrac{\mathbb{Z}}{7\mathbb{Z}}\rtimes\nicefrac{\mathbb{Z}}{3\mathbb{Z}}$ then $H$ has order 1,3, or 7 and so is abelian. 

What remains to be considered is when $H$ is isomorphic to either $\nicefrac{\mathbb{Z}}{n\mathbb{Z}}\rtimes\mathbb{Z}$ with $n$ odd and at least three or to $\nicefrac{\mathbb{Z}}{n\mathbb{Z}}\rtimes\nicefrac{\mathbb{Z}}{|i-j|\mathbb{Z}}$, with $n$ odd  and at least three and $|i-j|>3$. Let $B=\{(at),(ta)\}^{\pm 1}$. Note that $(at)^{-1}=_H t^{-1}a^{n-1}=_G (a^{\frac{n-1}{2}})t^{-1}$ and that $(a^{\frac{n-1}{2}}t^{-1})(ta)=_G a^{\frac{n+1}{2}}$. So as $(a^{\frac{n+1}{2}})^2=_H a$, the set $B$ is a generating set for $H$. Consider the word $(at)(ta)(at)^{-1}$, which is equal in $G$ to $a^3t$.

Note that $a^3t$ cannot be equal in $H$ to a single generator as $n\notin\{1,2\}$ and $|i-j|\neq 2$: the element $at$ is equal to in $H$ to $a^3t$ only if $a^2=_H 1$; the element $ta$ is equal in $H$ to $a^3t$ only if $a=_H 1$; the elements $(at)^{-1},\,(ta)^{-1}$ are equal  in $H$ to $a^3t$ only if $t^2=_H 1$. Note also that $a^3t$ cannot be equal in $H$ to a word of length two in the generators as $\;|i-j|\notin\{1,3\}$: the words $(at)(at),\,(at)(ta),\,(at)(ta)^{-1},\,(ta)(ta),\,(ta)(at),\,(ta)(at)^{-1},$ $(at)^{-1}(ta),\,(ta)^{-1}(at)$ are equal in $H$ to $a^3t$ only if $t=_H 1$; the words $(at)^{-1}(at)^{-1},$ $(at)^{-1}(ta)^{-1},\,(ta)^{-1}(ta)^{-1},\,(ta)^{-1}(at)^{-1}$ are equal in $H$ to $a^3t$ only if $t^3=_H 1$.

Therefore the word $(at)(ta)(at)^{-1}$ is geodesic over $B$. By \Cref{Rmk}, the geodesic language of $H$ over $B$ is not piecewise excluding.
\end{proof}

\OnlyQ

\begin{proof} Consider a minimal symmetric generating set $\{a,b\}^{\pm 1}$ for a two generator non-abelian group $G$ with piecewise excluding geodesic language for all finite symmetric generating sets. Because $G$ is non-abelian, $aba^{-1}\notin\{1,a,a^{-1},b\}$. Note that $aba^{-1}$ is not geodesic by \Cref{Rmk}, so it must then be equal in $G$ to either $b^{-1}$ or to a product of two generators. It can be shown, by straight-forward computations, that ten of the sixteen choices for words of length two over $\{a,b\}^{\pm 1}$ also lead to contradictions if they are equal in $G$ to $aba^{-1}$. For example, $aba^{-1}=_G ab^{-1}$ implies that $b^2=_G a$, which contradicts the assumption that $a $ and $b$ do not commute. Thus a representative of $aba^{-1}$ must be in the set $\{b^{-1},a^{-1}b,a^{-1}b^{-1},ba,b^2,b^{-1}a,b^{-2}\}$. Similarly, the possibilites for representatives of $bab^{-1}$ can be reduced to the set $\{a^{-1},b^{-1}a,b^{-1}a^{-1},ab,a^2,a^{-1}b,a^{-2}\}$.

Table 1 shows the group defined by only the two relations in each of the forty-nine possible pairs of choices for representaives of $aba^{-1}$ (along the first row) and for representatives of $bab^{-1}$ (along the first column). Note that by symmetry, the upper and lower diagonals are isomorphic groups. Most of the finite groups were found by entering the presentation into the GAP system and referencing the small group information within GAP; some (those listed below) required referencing groupprops.subwiki.org. We refer readers unfamiliar with GAP to~\cite{GAP}. The pairs $aba^{-1}=_G a^{-1}b^{-1}$ with $bab^{-1}=_G b^{-1}a^{-1}$ and $aba^{-1}=_G b^{-1}a$ with $bab^{-1}=_G a^{-1}b$ both returned the group [24:3], which is $SL_2(\nicefrac{\mathbb{Z}}{3\mathbb{Z}})$. The pair $aba^{-1}=_G b^{-2}$ with $bab^{-1}=_G a^{-2}$ returned the group [27,4], which is $\nicefrac{\mathbb{Z}}{9\mathbb{Z}}\rtimes\nicefrac{\mathbb{Z}}{3\mathbb{Z}}=$ $<x,y\,|\,x^9,y^3,yxy^{-1}x^{-4}>$. Notice that $aba^{-1}=a^{-1}b$ and $bab^{-1}=a^2$ are actually both the same relation, so this pair yields the group $BS(1,2)$. Groups which were reported to be infinite were calculated by hand using Tietze transformations.\\\vspace{-.25cm}
\begin{center}
\setlength\tabcolsep{3pt}\renewcommand{\arraystretch}{1.2}
    \begin{tabular}{| c | c | c | c | c | c | c | c |}
    \hline
    			&$b^{-1}$ 			&$a^{-1}b$		&$a^{-1}b^{-1}$		&$ba$			&$b^2$			&$b^{-1}a$		&$b^{-2}$\\\hline
    			
    $a^{-1}$ &{$Q_8^\ddagger$} & & & & & &\\\hline
    
    $b^{-1}a$ &{$\nicefrac{\mathbb{Z}}{3\mathbb{Z}}\rtimes\mathbb{Z}^\dagger$}  	&{$1$} & & & & &\\\hline
    
    $b^{-1}a^{-1}$ &{$\nicefrac{\mathbb{Z}}{6\mathbb{Z}}\times\nicefrac{\mathbb{Z}}{2\mathbb{Z}}$}	&{$1$}	&{$SL_2(\nicefrac{\mathbb{Z}}{3\mathbb{Z}})^\dagger$}	& & & &\\\hline
    
    $ab$ 		&{$\nicefrac{\mathbb{Z}}{3\mathbb{Z}}\rtimes\mathbb{Z}^\dagger$}	&{$1$}	&{$1$}	&{$1$}	& & &\\\hline
    
    $a^2$ 		&{$\nicefrac{\mathbb{Z}}{2\mathbb{Z}}$}	&{$BS(1,2)^\dagger$}	&{$S_3^\dagger$}	&{$\nicefrac{\mathbb{Z}}{3\mathbb{Z}}\rtimes\mathbb{Z}^\dagger$}	&{$1$} & &\\\hline
    
    $a^{-1}b$ 	&{$\nicefrac{\mathbb{Z}}{6\mathbb{Z}}\times\nicefrac{\mathbb{Z}}{2\mathbb{Z}}$}	&{$1$}	&{$\nicefrac{\mathbb{Z}}{5\mathbb{Z}}$}		&{$1$}	&{$S_3^\dagger$}	&{$SL_2(\nicefrac{\mathbb{Z}}{3\mathbb{Z}})^\dagger$}	&\\\hline
    
    $a^{-2}$ 	&{$\nicefrac{\mathbb{Z}}{6\mathbb{Z}}$}	&{$\mathbb{Z}$}	&{$\nicefrac{\mathbb{Z}}{6\mathbb{Z}}$}		&{$\nicefrac{\mathbb{Z}}{5\mathbb{Z}}\rtimes_\alpha\mathbb{Z}^\dagger$}	&{$\nicefrac{\mathbb{Z}}{3\mathbb{Z}}$}	&{$\nicefrac{\mathbb{Z}}{6\mathbb{Z}}$}	&{$\nicefrac{\mathbb{Z}}{9\mathbb{Z}}\rtimes_\beta\nicefrac{\mathbb{Z}}{3\mathbb{Z}}^\dagger$}\\\hline
    
\end{tabular}\\\vspace{.75em}
Table 1.\end{center}

The map $\alpha$ in the entry $\nicefrac{\mathbb{Z}}{5\mathbb{Z}}\rtimes_\alpha\mathbb{Z}$ is defined by the generator of $\mathbb{Z}$ conjugating the generator of $\nicefrac{\mathbb{Z}}{5\mathbb{Z}}$ to its square and the map $\beta$ in the entry $\nicefrac{\mathbb{Z}}{9\mathbb{Z}}\rtimes_\beta\nicefrac{\mathbb{Z}}{3\mathbb{Z}}$ is defined by the generator of $\nicefrac{\mathbb{Z}}{3\mathbb{Z}}$ conjugating the generator of $\nicefrac{\mathbb{Z}}{9\mathbb{Z}}$ to its fourth power. The group $\nicefrac{\mathbb{Z}}{3\mathbb{Z}}\rtimes\mathbb{Z}$ is the non-trivial semi-direct product.

The group $G$ must be a quotient of one of the groups in the table. All quotients of abelian groups are abelian, so $G$ cannot be a quotient of an abelian group in the table. The groups which are non-abelian but have a geodesic language which is not piecewise excluding for some finite symmetric generating set, demonstrated below, are denoted by a single dagger. We show below that all proper quotients of each of these groups are either abelian or have a geodesic language which is not piecewise excluding for some finite symmetric generating set. In each case of a geodesic language which is not piecewise excluding, a  check of all words of length at most two against a set of normal forms shows that the given length three word is geodesic.

The group $S_3=<a,b\,|\,a^2,b^2,(ab)^3>$ with the generating set $A=\{a,b\}^{\pm 1}$ has $aba^{-1}\in\mathsf{Geo}(S_3,A)$. So by \Cref{Rmk} $\mathsf{Geo}(S_3,A)$ is not piecewise excluding. The only proper quotients of $S_3$ are abelian. Thus $G$ cannot be a quotient of $S_3$.

The group $SL_2(\nicefrac{\mathbb{Z}}{3\mathbb{Z}})=<a,b\,|\,a^6,b^4,ab^{-1}ab^{-1}ab>$ with the generating set $A=\{a,b\}^{\pm 1}$ has $bab^{-1}\in\mathsf{Geo}(SL_2(\nicefrac{\mathbb{Z}}{3\mathbb{Z}}),A)$. So by \Cref{Rmk} $\mathsf{Geo}(SL_2(\nicefrac{\mathbb{Z}}{3\mathbb{Z}}),A)$ is not piecewise excluding. The only proper quotients of $SL_2(\nicefrac{\mathbb{Z}}{3\mathbb{Z}})$ are quotients of $A_4$ and quotients of $\nicefrac{\mathbb{Z}}{3\mathbb{Z}}$ (see groupprops.subwiki.org). The group $A_4=<a,b\,|\,a^3,b^2,(ab)^3>$ with the generating set $B=\{a,b\}^{\pm 1}$ has $bab^{-1}\in\mathsf{Geo}(A_4,B)$. By \Cref{Rmk} $\mathsf{Geo}(A_4,B)$ is not piecewise excluding. The only proper quotients of $A_4$ are abelian. Thus $G$ cannot be a quotient of $SL_2(\nicefrac{\mathbb{Z}}{3\mathbb{Z}})$.

The group $\nicefrac{\mathbb{Z}}{9\mathbb{Z}}\rtimes_\beta\nicefrac{\mathbb{Z}}{3\mathbb{Z}}=<x,y\,|\,x^9,y^3,yxy^{-1}x^{-4}>$ with the generating set $A=\{x,y\}^{\pm 1}$ has $xyx^{-1}\in\mathsf{Geo}(\nicefrac{\mathbb{Z}}{9\mathbb{Z}}\rtimes_\beta\nicefrac{\mathbb{Z}}{3\mathbb{Z}},A)$. By \Cref{Rmk} $\mathsf{Geo}(\nicefrac{\mathbb{Z}}{9\mathbb{Z}}\rtimes_\beta\nicefrac{\mathbb{Z}}{3\mathbb{Z}},A)$ is not piecewise excluding. As nontrivial proper subgroups of $\nicefrac{\mathbb{Z}}{9\mathbb{Z}}\rtimes\nicefrac{\mathbb{Z}}{3\mathbb{Z}}$ have order either $3$ or $3^2$, proper nontrivial quotients of $\nicefrac{\mathbb{Z}}{9\mathbb{Z}}\rtimes\nicefrac{\mathbb{Z}}{3\mathbb{Z}}$ have order either $3^2$ or $3$ and thus are abelian. Hence $G$ cannot be a quotient of $\nicefrac{\mathbb{Z}}{9\mathbb{Z}}\rtimes_\beta\nicefrac{\mathbb{Z}}{3\mathbb{Z}}$.

\Cref{BS} shows that the group $\nicefrac{\mathbb{Z}}{3\mathbb{Z}}\rtimes\mathbb{Z}=<a,x\,|\,a^3,xax^{-1}a>\cong\nicefrac{BS(1,2)}{<a^3>^N}$ and all its proper quotients are either abelian or have a finite symmetric generating set with geodesic language which is not piecewise excluding. Thus $G$ cannot be a quotient of $\nicefrac{\mathbb{Z}}{3\mathbb{Z}}\rtimes\mathbb{Z}$.

The group $\nicefrac{\mathbb{Z}}{5\mathbb{Z}}\rtimes_\alpha\mathbb{Z}=<a,x\,|\,a^5,xax^{-1}a^2>$ with the generating set $A=\{x,y\}^{\pm 1}$, where $y=_Gx^3a$, has $yxy^{-1}\in\mathsf{Geo}(\nicefrac{\mathbb{Z}}{5\mathbb{Z}}\rtimes_\alpha\mathbb{Z},A)$. By \Cref{Rmk} $\mathsf{Geo}(\nicefrac{\mathbb{Z}}{5\mathbb{Z}}\rtimes_\alpha\mathbb{Z},A)$ is not piecewise excluding. \Cref{Z5} shows that all proper quotients of $\nicefrac{\mathbb{Z}}{5\mathbb{Z}}\rtimes_\alpha\mathbb{Z}$ are either abelian or have a finite symmetric generating set with geodesic language which is not piecewise excluding. Thus $G$ cannot be a quotient of $\nicefrac{\mathbb{Z}}{5\mathbb{Z}}\rtimes_\alpha\mathbb{Z}$.

The group $BS(1,2)=<a,t\,|\,tat^{-1}a^{-2}>$ with the generating set $A=\{a,t\}^{\pm 1}$ has $t^{-1}at\in\mathsf{Geo}(BS(1,2),A)$. By \Cref{Rmk} $\mathsf{Geo}(BS(1,2),A)$ is not piecewise excluding. \Cref{BS} shows that all proper quotients of $BS(1,2)$ are either abelian or have a finite symmetric generating set with geodesic language which is not piecewise excluding. Thus $G$ cannot be a quotient of $BS(1,2)$.

The quaternion group, $Q_8$, denoted by a double dagger, has piecewise excluding geodesic language for all finite symmetric generating sets by \Cref{Q8}. The only proper quotients of $Q_8$ are abelian. Therefore, as all other possibilities lead to contradictions of our assumptions, the group $G$ must be isomorphic to $Q_8$.
\end{proof}

The class of groups with piecewise excluding geodesic languages for all finite symmetric generating sets does not even have one of the nicest closure properties one might hope for.

\QxQ

\begin{proof} Consider the generating set $A=\{i_1,j_1k_2,i_2,k_2\}^{\pm 1}$ for the group $G=Q_8\times Q_8$, where $i,j,k$ are as in the generating set for $Q_8=<i,j,k\,|\,ijk^{-1},jki^{-1},$ $kij^{-1},i^4>$ and the subscripts denote to which copy of $Q_8$ each belongs.  Consider the element $g=i_1(j_1k_2)i_1^{-1}=_G i_1^2j_1k_2$. Note that $g\notin A$. If $g=_G ab$ for some $a,b\in A$, then exactly one of $a,b$ must be $(j_1k_2)^{\pm 1}$ or $(k_2)^{\pm 1}$ and the other must be $i_1^{\pm 1}$ so that the projection into the second copy of $Q_8$ is $k_2$. But that forces the projection into the first copy of $Q_8$ to be one of $i_1^{\pm 1},k_1^{\pm 1}$. Hence $g$ cannot be written with fewer than three generators, and so $i_1(j_1k_2)i_1^{-1}\in \mathsf{Geo}(G,A)$. Thus $\mathsf{Geo}(G,A)$ is not piecewise excluding by \Cref{Rmk}.
\end{proof}

\begin{proposition} Let $\mathbb{Z}^n=<x_i,...x_n\,|\,[x_i,x_j]\text{ whenever }i\neq j>$ and let $G=\mathbb{Z}^n\rtimes_\phi\nicefrac {\mathbb{Z}}{2\mathbb{Z}}$ for some $n\in\mathbb{N}$ with either (1) $\phi(x_i)=x_i^{-1}$ for some $i\in\{1,...,n\}$ and $\phi(x_k)=x_k$ for all $k\in\{1,...,n\}\setminus\{i\}$ or (2) $\phi(x_i)=x_j$ for some $i,j\in\{1,...,n\}$ with $i\neq j$ and $\phi(x_k)=x_k$ for all $k\in\{1,...,n\}\setminus\{i,j\}$. Then for every finite symmetric generating set of $A$ of $G$, the geodesic language of $G$ over $A$ is not piecewise excluding. Moreover, there is a geodesic word over $A$ containing both a generator and its inverse.
\end{proposition}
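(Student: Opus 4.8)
\emph{Proof proposal.} The plan is to deduce ``not piecewise excluding'' from the ``moreover'' clause using \Cref{Rmk}: if some geodesic word over $A$ contains an occurrence of a generator $b\neq_G 1_G$ followed by an occurrence of $b^{-1}$ (the two occurrences being the same letter when $b$ is an involution), then that geodesic has $bb^{-1}$ as a piecewise subword, so $bb^{-1}$ is not an excluded piecewise subword, and \Cref{Rmk} (contrapositive) shows $\mathsf{Geo}(G,A)$ is not piecewise excluding. Hence it is enough to produce, for each finite symmetric generating set $A$ of $G$, a geodesic word over $A$ of the form $bcb^{-1}$ with $b,c\in A$; this word is then also the witness required by the ``moreover'' clause.

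Such a word will be pulled back from an infinite dihedral quotient. Let $t_0=(0,1)\in G$ be the canonical involution, and let $M\trianglelefteq G$ be generated by the $\phi$-fixed coordinate directions: $M=\langle x_k: k\neq i\rangle$ in case (1) and $M=\langle x_ix_j,\ x_k\ (k\neq i,j)\rangle$ in case (2). Every listed generator of $M$ is fixed by $\phi$, so $M$ is central, hence normal, and $G/M\cong D_\infty=\langle z,t\mid t^2,\ tztz\rangle$ under a surjection $\pi$ with $\pi(t_0)=t$ and $\pi(x_i)=z$ (an element of infinite order). The reduction hinges on the following lifting principle: if $\bar b\,\bar c\,\bar b^{-1}$ is a geodesic word over $\pi(A)$ in $D_\infty$ (so of length $3$) and $b,c\in A$ are any lifts of $\bar b,\bar c$, then $bcb^{-1}$ is a geodesic word over $A$ in $G$. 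Indeed $\pi$, applied letter by letter, carries any word over $A$ representing the element $bcb^{-1}$ of $G$ to a word over $\pi(A)$ of the same length representing the element $\bar b\bar c\bar b^{-1}$ of $D_\infty$; hence $\ell_A(bcb^{-1})\geq\ell_{\pi(A)}(\bar b\bar c\bar b^{-1})=3$, while trivially $\ell_A(bcb^{-1})\leq 3$. So the whole statement reduces to the assertion: \emph{for every finite symmetric generating set $S$ of $D_\infty$ there exist $b,c\in S$ with $bcb^{-1}$ geodesic over $S$}; we apply it to $S=\pi(A)$ and lift (discarding a trivial generator from $\pi(A)$ beforehand if one is present changes nothing).

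To prove the $D_\infty$ assertion I would use that every nonidentity element of $D_\infty$ is a translation $z^m$ ($m\neq 0$, of infinite order) or a reflection $z^mt$ (an involution), with $z^a(z^bt)=z^{a+b}t$, $(z^at)z^b=z^{a-b}t$, and $(z^at)(z^bt)=z^{a-b}$, and that $S$ must contain a reflection. \textbf{Case A: $S$ contains a translation.} Choose $z^{a_0}\in S$ with $a_0>0$ largest among the absolute values of the translation generators, and $z^Kt\in S$ with $K$ largest among the parameters of the reflection generators. Then $z^{a_0}(z^Kt)z^{-a_0}=z^{2a_0+K}t$, whose parameter $2a_0+K$ strictly exceeds the parameter of every reflection generator and of every length-two product of generators that represents a reflection (such a product is of the form $z^a(z^{K'}t)$ or $(z^{K'}t)z^a$, hence has parameter at most $a_0+K$). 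So $z^{2a_0+K}t$ has word length $3$ and $z^{a_0}(z^Kt)z^{-a_0}$ is geodesic, of the form $bcb^{-1}$ with $b=z^{a_0}$. \textbf{Case B: every generator in $S$ is a reflection.} Then $|S|\geq 2$; let $r_{\max}=z^{K_{\max}}t$ and $r_{\min}=z^{K_{\min}}t$ be generators realizing the largest and smallest parameters (distinct since $|S|\geq 2$). Then $r_{\max}r_{\min}r_{\max}^{-1}=r_{\max}r_{\min}r_{\max}=z^{2K_{\max}-K_{\min}}t$ has parameter exceeding that of every generator, and it is not a product of two generators (a product of two reflections is a translation, of the wrong parity). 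So this element has word length $3$ and $r_{\max}r_{\min}r_{\max}$ is geodesic, of the form $bcb^{-1}$ with $b=r_{\max}$.

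I expect the genuinely ``designed'' step to be the reduction itself: geodesics need not survive a quotient, so everything rests on the observation that a word of the palindromic shape $bcb^{-1}$ can only lose letters under $\pi$, never gain them, which pins it to being geodesic upstairs as soon as it is geodesic downstairs. Granted that, the $D_\infty$ assertion is a short finite computation; the two cases above are also the place where the hypothesis on $\phi$ is used, since it is exactly what makes $G/M$ equal to $D_\infty$ rather than a larger virtually abelian group (so that one always has a dihedral target to pull back from).
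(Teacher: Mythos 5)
Your proposal is correct, but it is organized differently from the paper's argument, so a comparison is worth recording. The paper works directly in $G$ with the normal forms $x_1^{m_1}\cdots x_n^{m_n}y^{\epsilon}$ and, in each of the two cases for $\phi$ and two subcases on the shape of the generating set, chooses generators extremal with respect to the relevant parameter (the exponent of $x_i$ in case (1), the difference of the exponents of $x_i$ and $x_j$ in case (2)) and checks by hand that the conjugate $aba^{-1}$ cannot be expressed by a word of length at most two. You instead kill the $\phi$-fixed directions (which are indeed central, so your $M$ is normal and $G/M\cong D_\infty$ in both cases, with $\bar{x_j}=\bar{x_i}^{-1}$ in case (2)), prove the statement for $D_\infty$ with an arbitrary finite symmetric generating set by the same kind of extremal-parameter computation (your translation parameter is exactly the paper's exponent, respectively difference of exponents), and then lift via the observation that a homomorphic image of a representative word has the same length, so $\ell_A(bcb^{-1})\geq\ell_{\pi(A)}(\bar b\bar c\bar b^{-1})=3$. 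That lifting step is precisely the mechanism of the paper's later proposition on extensions (the one preceding the final corollary), proved there by the same projection-of-lengths argument; your proof in effect derives the proposition from its own $n=1$ instance plus that extension lemma, which is a legitimate, non-circular restructuring since you prove the lifting principle inline. What your route buys is modularity and a much lighter case analysis (the four subcases collapse to two one-parameter cases in $D_\infty$, and the ``moreover'' clause is immediate because the witness is literally $bcb^{-1}$ with $b\in A$, $b\neq_G 1_G$, so \Cref{Rmk} applies); what the paper's route buys is a self-contained computation that never needs the quotient construction or the verification that $G/M$ is exactly $D_\infty$. Your handling of the minor pitfalls is adequate: $\pi(A)$ is symmetric, discarding a trivial image does not change word lengths, $\pi(A)$ must contain a reflection, and in your Case B two distinct reflections exist since a single involution cannot generate $D_\infty$.
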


\begin{proof}
Let $B=\{x_1,...,x_n,y\}^{\pm 1}$, where $\nicefrac {\mathbb{Z}}{2\mathbb{Z}}=<y>$, and let $N=\{x_1^{m_1}\cdots x_n^{m_n}y^\epsilon\;|\;m_i$ $\in\mathbb{Z}\text{ for all }i\in\{1,...,n\}\text{ and }\epsilon\in\{0,1\}\}$, a set of normal forms for $G$ over $B$. Let $A$ be any finite symmetric generating set for $G$. For every word $w\in A^*$, let $\rho_N (w)$ be the unique word in $N$ such that $\rho_N (w)=_G w$.\\

\noindent\underline{Case 1}: $\phi(x_i)=x_i^{-1}$ for some $i\in\{1,...,n\}$ and $\phi(x_h)=x_h$ for all $h\in\{1,...,n\}\setminus\{i\}$.

\noindent\emph{Subcase A: Suppose there is a generator $\alpha\in A$ such that $\rho_N (\alpha)=x_1^{m_1}\cdots x_i^m\cdots x_n^{m_n}$ for some $m\in\mathbb{Z}\setminus \{0\}$.}

Let $a\in A$ be the generator with $\rho_N (a)=x_1^{m_1}\cdots x_i^m\cdots x_n^{m_n}$ such that $m$ is maximal. Note that $m>0$ because $A$ is symmetric and that there must be at least one generator in $\beta\in A$ such that $\rho_N (\beta)=x_1^{k_1}\cdots x_n^{k_n}y$. Let $b\in A$ be the generator with $\rho_N (b)=x_1^{k_1}\cdots x_i^k\cdots x_n^{k_n}y$ such that $k$ is maximal. Suppose that $aba^{-1}$ is not geodesic. Observe that $\rho_N (aba^{-1})=x_1^{k_1}\cdots x_i^{2m+k}\cdots x_n^{k_n}y$.

\emph{subsubcase i: The word $aba^{-1}=_G\gamma$ for some $\gamma\in A$.} Because $m>0$ implies that $2m+k>k$, no generator $\gamma\in A$ with $\rho_N (\gamma)=x_1^{k_1}\cdots x_i^{2m+k}\cdots x_n^{k_n}y$ exists by maximality of $k$.

\emph{subsubcase ii: The word $aba^{-1}=_G\delta\zeta$ for some $\delta,\zeta\in A$ with $\rho_N (\delta)=x_1^{m'_1}\cdots x_i^p\cdots$ $x_n^{m'_n}$ and $\rho_N (\zeta)=x_1^{k'_1}\cdots x_i^q\cdots x_n^{k'_n}y$.} Then $p+q=2m+k$.  But the pair of inequalites $p\leq m$ and $q\leq k$ imply that $p+q\leq m+k<2m+k$. Thus no such pair of generators $\delta,\zeta\in A$ exists.

\emph{subsubcase iii: The word $aba^{-1}=_G\zeta\delta$ for some $\delta,\zeta\in A$ with $\rho_N (\delta)=x_1^{m'_1}\cdots x_i^p\cdots$ $x_n^{m'_n}$ and $\rho_N (\zeta)=x_1^{k'_1}\cdots x_i^q\cdots x_n^{k'_n}y$.} Then $q-p=2m+k$. But if $p\geq 0$, then $q-p\leq q\leq k< 2m+k$; if $p<0$, then $|p|\leq m$ implies that $q-p\leq k+m < 2m+k$. Thus no such pair of generators $\delta,\zeta\in A$ exists.

Hence $aba^{-1}$ is geodesic over $A$.

\noindent\emph{Subcase B: Suppose there is no generator $\alpha\in A$ such that $\rho_N (\alpha)=x_1^{m_1}\cdots x_i^m\cdots x_n^{m_n}$ for some $m\in\mathbb{Z}\setminus \{0\}$.}

Let $a\in A$ be the generator with $\rho_N (a)=x_1^{m_1}\cdots x_i^m\cdots x_n^{m_n}y$ such that $m$ is maximal. Let $b\in A$ be the generator with $\rho_N (b)=x_1^{k_1}\cdots x_i^k\cdots x_n^{k_n}y$ such that $k$ is minimal. Note that $k\neq m$, as otherwise $A$ generates only elements of $G$ with the power of $x_i$ in normal form either $0$ or $m$. Suppose that $aba^{-1}$ is not geodesic. Observe that $\rho_N (aba^{-1})=x_1^{k_1}\cdots$ $x_i^{2m-k}\cdots x_n^{k_n}y$. Let $\delta,\zeta\in A$ where $\rho_N(\delta)=x_1^{m'_1}\cdots x_i^p\cdots x_n^{m'_n}y^{\epsilon_1}$ and $\rho_N(\zeta)=x_1^{k'_1}\cdots x_i^q\cdots x_n^{k'_n}y^{\epsilon_2}$. Note that $\rho_N(\delta\zeta)=x_1^{m'_1+k'_1}\cdots x_i^{p+(-1)^{\epsilon_1}q}\cdots x_n^{m'_n+k'_n}y^{\epsilon_1+\epsilon_2 (\mathsf{mod}\,2)}$ and that if $\epsilon_1\neq 0$ [or $\epsilon_2\neq 0$], then $p=0$ [$q=0$]. So $p+(-1)^{\epsilon_1}q\leq m$ whenever $\epsilon_1+\epsilon_2 (\mathsf{mod}\,2)\neq 0$. If $aba^{-1}$ is equal in $G$ to a generator $\gamma\in A$ with $\rho_N (\gamma)=x_1^{k_1}\cdots x_i^{2m-k}\cdots x_n^{k_n}y$ or $aba^{-1}=_G\delta\zeta$, we have a contradiction to our choices of $a$ and $b$ since $k<m$ implies that $2m-k>m$. Hence $aba^{-1}$ is geodesic over $A$.\\

\noindent\underline{Case 2}: $\phi(x_i)=x_j$ for some $i,j\in\{1,...,n\}$ with $i\neq j$ and $\phi(x_h)=x_h$ for all $h\in\{1,...,n\}\setminus\{i,j\}$.

\noindent\emph{Subcase A: Suppose there is a generator $\alpha\in A$ such that $\rho_N (\alpha)=x_1^{m_1}\cdots x_i^m\cdots x_j^k\cdots$ $x_n^{m_n}$ for some $m\neq k\in\mathbb{Z}$.}

Let $c\in A$ be the generator with $\rho_N (c)=x_1^{m_1}\cdots x_i^{m_0}\cdots x_j^{k_0}\cdots x_n^{m_n}$ such that $|m_0-k_0|$ maximal. Note that $|m_0-k_0|>0$ by the assumption of this subcase and that there must be at least one generator $\beta\in A$ such that $\rho_N (\beta)=x_1^{k_1}\cdots x_n^{k_n}y$. Let $b\in A$ be the generator with $\rho_N (b)=x_1^{k_1}\cdots x_i^p\cdots x_j^q\cdots x_n^{k_n}y$ such that $|p-q|$ is maximal. If the signs of $m_0-k_0$ and $p-q$ agree or if $p=q$, let $a=c$; otherwise, let $a=c^{-1}$. Let $m,k\in\mathbb{Z}$ be such that $\rho_N(a)=_Gx_1^{m_1}\cdots x_i^m\cdots x_j^k\cdots x_n^{m_n}$. Suppose that $aba^{-1}$ is not geodesic. Observe that $\rho_N (aba^{-1})=x_1^{k_1}\cdots x_i^{m+p-k}\cdots x_j^{k+q-m}\cdots x_n^{k_n}y$. Becuase the signs of $m-k$ and $p-q$ do not disagree, the difference in the powers of $x_i$ and $x_j$ in $\rho_N (aba^{-1})$ is $|(m+p-k)-(k+q-m)|=|2(m-k)+(p-q)|=2|m-k|+|p-q|$.

\emph{subsubcase i: The word $aba^{-1}=_G\gamma$ for some $\gamma\in A$.} Because $|m-k|>0$ implies that $2|m-k|+|p-q|>|p-q|$, no generator $\gamma\in A$ with $\rho_N (\gamma)=x_1^{k_1}\cdots x_i^{m+p-k}\cdots x_j^{k+q-m}\cdots x_n^{k_n}y$ exists by our choice of $b$.

\emph{subsubcase ii: The word $aba^{-1}=_G\delta\zeta$ for some $\delta,\zeta\in A$ with $\rho_N (\delta)=x_1^{m'_1}\cdots x_i^l\cdots$ $x_j^r\cdots x_n^{m'_n}$ and $\rho_N (\zeta)=x_1^{k'_1}\cdots x_i^s\cdots x_j^t\cdots x_n^{k'_n}y$.} Then $l+s=m+p-k$ and $r+t=k+q-m$. But the pair of inequalities $|l-r|\leq|m-k|$ and $|s-t|\leq|p-q|$ imply that $|(l+s)-(r+t)|\leq |m-k|+|p-q|<2|m-k|+|p-q|$. Thus no such pair of generators $\delta,\zeta\in A$ exists.

\emph{subsubcase iii: The word $aba^{-1}=_G\zeta\delta$ for some $\delta,\zeta\in A$ with $\rho_N (\delta)=x_1^{m'_1}\cdots x_i^l\cdots$ $x_j^r\cdots x_n^{m'_n}$ and $\rho_N (\zeta)=x_1^{k'_1}\cdots x_i^s\cdots x_j^t\cdots x_n^{k'_n}y$.} Then $r+s=m+p-k$ and $l+t=k+q-m$. But the pair of inequalities $|l-r|\leq|m-k|$ and $|s-t|\leq|p-q|$ imply that $|(r+s)-(l+t)|\leq |m-k|+|p-q|<2|m-k|+|p-q|$. Thus no such pair of generators $\delta,\zeta\in A$ exists.

Hence $aba^{-1}$ is geodesic over $A$.

\noindent\emph{Subcase B: Suppose there is no generator $\alpha\in A$ such that $\rho_N (\alpha)=x_1^{m_1}\cdots x_i^m\cdots x_j^k\cdots$ $x_n^{m_n}$ for some $m\neq k\in\mathbb{Z}$.}

Let $a\in A$ be the generator with $\rho_N (a)=x_1^{m_1}\cdots x_i^m\cdots x_j^k\cdots x_n^{m_n}y$ such that $|m-k|$ is maximal. Let $b\in A$ be the generator with $\rho_N (b)=x_1^{k_1}\cdots x_i^p\cdots x_j^q\cdots x_n^{k_n}y$ such that $|p-q|$ is minimal. Note that $|m-k|\neq |p-q|$ as otherwise $A$ would only generate elements of $G$ with even differences in powers of $x_i$ and $x_j$ in normal forms without a $y$: the product $(x_1^{m'_1}\cdots x_i^c\cdots x_j^d\cdots x_n^{m'_n}y)(x_1^{k'_1}\cdots x_i^e\cdots x_j^f\cdots x_n^{k'_n}y)=_Gx_1^{m'_1+k'_1}\cdots x_i^{c+f}\cdots x_j^{d+e}\cdots x_n^{m'_n+k'_n}$; if $|c-d|=|e-f|$ then $|(c+f)-(d+e)|=|(c-d)-(e-f)|$, which is either $0$ or $2|c-d|$. Suppose that $aba^{-1}$ is not geodesic. Observe that $\rho_N (aba^{-1})=x_1^{k_1}\cdots x_i^{m+q-k}\cdots x_j^{k+p-m}\cdots x_n^{k_n}y$. Because $|m-k|>|q-p|$ implies that $(m-k)+(q-p)$ has the same sign as that of $m-k$, the difference in the powers of $x_i$ and $x_j$ in $\rho_N (aba^{-1})$ is $|(m+q-k)-(k+p-m)|=|(m-k)+(q-p)+(m-k)|=|(m-k)+(q-p)|+|m-k|>|m-k|$. The word $aba^{-1}$ must either be equal in $G$ to a generator $\gamma\in A$ with $\rho_N (\gamma)=x_1^{k_1}\cdots x_i^{m+q-k}\cdots x_j^{k+p-m}\cdots x_n^{k_n}y$ or to a product of generators $\delta,\zeta\in A$ with $\rho_N (\delta)=x_1^{l_1}\cdots x_i^t\cdots x_j^t\cdots x_n^{l_n}$ and $\rho_N (\zeta)=x_1^{l'_1}\cdots x_i^r\cdots x_j^s\cdots x_n^{l'_n}y$ such that $t+r=m+q-k$ and $t+s=k+p-m$ (note that $\delta\zeta=_G\zeta\delta$). By our choice of $a$, the largest possible difference in the powers of $x_i$ and $x_j$ in $\rho_N (\gamma)$, $\rho_N (\delta\zeta)$, or $\rho_N (\zeta\delta)$ is $|m-k|$. Therefore neither such a generator $\gamma$ nor such a pair of generators $\delta,\zeta$ exists. Hence $aba^{-1}$ is geodesic over $A$.

Thus $aba^{-1}\in\mathsf{Geo}(G,A)$ for the generators $a$ and $b$ defined in each subcase.  By \Cref{Rmk} $\mathsf{Geo}(G,A)$ cannot be piecewise excluding.
\end{proof}

\VA

\Extns

\begin{proof} Let $ab_1b_2\cdots b_na^{-1}\in \mathsf{Geo}(K,\pi (A))$ for some $a\in\pi(A),\;b_1,...,b_n\in\pi(A)$.  For each $x\in\pi(A)$, choose a unique preimage under $\pi$ in $A$, denoted $\bar{x}$, such that $\overline{x^{-1}}=\bar{x}^{-1}$. If $\bar{a}\bar{b_1}\bar{b_2}\cdots\bar{b_n}\bar{a}^{-1}\notin \mathsf{Geo}(G,A)$, then there is a word of length at most $n+1$ over $A$ equal in $G$ to $\bar{a}\bar{b_1}\bar{b_2}\cdots\bar{b_n}\bar{a}^{-1}$, say it is $x_1x_2\cdots x_k$. Then $\pi(x_1x_2\cdots x_k)=_K\pi(x_1)\pi(x_2)\cdots\pi(x_k)=_Kab_1b_2\cdots b_na^{-1}$, which implies that $ab_1b_2\cdots b_na^{-1}$ is not geo- desic, giving a contradiction.  Thus $\bar{a}\bar{b_1}\bar{b_2}\cdots\bar{b_n}\bar{a}^{-1}$ must be geodesic, and so by \Cref{Rmk}, $\mathsf{Geo}(G,A)$ cannot be piecewise excluding.
\end{proof}

\begin{corollary} Let $\mathbb{Z}^n=<x_i,...x_n\,|\,[x_i,x_j]\text{ whenever }i\neq j>$ and let $G=\mathbb{Z}^n\rtimes_\phi\nicefrac {\mathbb{Z}}{2\mathbb{Z}}$ for some $n\in\mathbb{N}$ with either $\phi(x_i)=x_i^{-1}$ for some $i\in\{1,...,n\}$ and $\phi(x_k)=x_k$ for all $k\in\{1,...,n\}\setminus\{i\}$ or  $\phi(x_i)=x_j$ for some $i,j\in\{1,...,n\}$ with $i\neq j$ and $\phi(x_k)=x_k$ for all $k\in\{1,...,n\}\setminus\{i,j\}$. Then any group with a quotient isomorphic to $G$ has a geodesic language which is not piecewise excluding for any finite symmetric generating set.
\end{corollary}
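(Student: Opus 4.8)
The plan is to deduce this immediately from the two preceding propositions. Let $\Gamma$ be a finitely generated group admitting a surjection $\pi\colon\Gamma\to G$, and set $H=\ker\pi$, so that $1\to H\to\Gamma\stackrel{\pi}{\to}G\to 1$ is a group extension. Fix an arbitrary finite symmetric generating set $A$ for $\Gamma$. Then $\pi(A)$ is a finite symmetric generating set for $G$, and since $G$ is one of the semidirect products $\mathbb{Z}^n\rtimes_\phi\nicefrac{\mathbb{Z}}{2\mathbb{Z}}$ treated in the preceding proposition on $\mathbb{Z}^n\rtimes_\phi\nicefrac{\mathbb{Z}}{2\mathbb{Z}}$, that proposition applied over the generating set $\pi(A)$ produces generators $a,b\in\pi(A)$ with $aba^{-1}\in\mathsf{Geo}(G,\pi(A))$. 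Taking $w=b\in\pi(A)^*$, the hypothesis of the preceding proposition on extensions is met (for the extension $1\to H\to\Gamma\stackrel{\pi}{\to}G\to 1$, the generating set $A$, the letter $a$, and the word $w$), so $\mathsf{Geo}(\Gamma,A)$ is not piecewise excluding. As $A$ was an arbitrary finite symmetric generating set of $\Gamma$, this is exactly the claim.

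The only point worth spelling out is that the proposition on extensions is stated for extensions of finitely generated groups, while $\ker\pi$ need not be finitely generated even when $\Gamma$ is; this is harmless, since that proposition's argument uses only chosen preimages in $A$ of the generators in $\pi(A)$ together with a length comparison under $\pi$, so it applies verbatim. Concretely, lift $aba^{-1}$ to $\bar a\bar b\bar a^{-1}\in A^*$ via preimages chosen so that $\overline{x^{-1}}=\bar x^{-1}$; if $\bar a\bar b\bar a^{-1}$ were not geodesic in $\Gamma$, a strictly shorter word over $A$ would represent the same element, and applying $\pi$ would give a word over $\pi(A)$ shorter than $aba^{-1}$ representing the same element of $G$, contradicting geodesity of $aba^{-1}$. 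Since $aba^{-1}$ has length $3$ it is a nontrivial element of $G$, so $a\neq_G 1_G$ and hence $\bar a\neq_\Gamma 1_\Gamma$; as the geodesic word $\bar a\bar b\bar a^{-1}$ contains $\bar a\bar a^{-1}$ as a piecewise subword, \Cref{Rmk} shows $\mathsf{Geo}(\Gamma,A)$ cannot be piecewise excluding.

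I do not expect a genuine obstacle here: all the substantive work is already packaged into the preceding proposition on $\mathbb{Z}^n\rtimes_\phi\nicefrac{\mathbb{Z}}{2\mathbb{Z}}$ and into the proposition on extensions, so the corollary is a short deduction once one observes that a quotient map $\Gamma\to G$ exhibits $\Gamma$ as an extension of $G$ and that $\pi$ carries generating sets to generating sets. The minor bookkeeping around the finite-generation hypothesis of the extension proposition is the only subtlety, and it is resolved by the direct lifting argument above.
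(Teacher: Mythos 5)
Your proposal is correct and is exactly the deduction the paper intends: the corollary is stated without a separate proof precisely because it follows by applying the preceding proposition on $\mathbb{Z}^n\rtimes_\phi\nicefrac{\mathbb{Z}}{2\mathbb{Z}}$ to the generating set $\pi(A)$ and then the extension proposition with $w=b$. Your remark that the finite-generation hypothesis on the kernel is never used (and your direct lifting argument covering it) is a sensible extra precaution, not a departure from the paper's route.
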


\bibliography{Bibliography2}{}
\bibliographystyle{plain}

\noindent Department of Mathematics, University of Nebraska, Lincoln, NE 68588, USA\\
E-mail address: marandafranke@gmail.com
\end{document}